\newtheorem{theo}{Theorem}[section]
\newtheorem{thm}[theo]{Theorem}
\newtheorem{lem}[theo]{Lemma}
\newtheorem{cor}[theo]{Corollary}
\newtheorem{prop}[theo]{Proposition}
\newtheorem*{thmM}{Main Theorem}
\theoremstyle{definition}
\newtheorem{dfn}[theo]{Definition}
\theoremstyle{remark}
\numberwithin{equation}{section}
\newcommand{\crp}{\mathrm{CrP}}
\newcommand{\di}{\ol{\mathrm{Di}}}
\def\R{\mathbb{R}}
\def\Z{\mathbb{Z}}
\def\Mc{\mathcal{M}}
\newcommand{\oc}{\ol{c}}
\newcommand{\oy}{\ol{y}}
\newcommand{\oq}{\ol{q}}
\newcommand{\K}{\mathcal{K}}
\newcommand{\C}{\mathbb{C}}
\newcommand{\disk}{\mathbb{D}}
\newcommand{\cdisk}{\ol{\mathbb{D}}}
\newcommand{\ol}{\overline}
\newcommand{\sm}{\setminus}
\newcommand{\A}{\mathcal{A}}
\newcommand{\Tc}{\mathcal{T}}
\newcommand{\bd}{\mathrm{Bd}}
\newcommand{\lam}{\mathcal{L}}
\newcommand{\si}{\sigma}
\newcommand{\uc}{\mathbb{S}}
\newcommand{\al}{\alpha}
\newcommand{\be}{\beta}
\newcommand{\e}{\varepsilon}
\newcommand{\M}{\mathcal{M}}
\newcommand{\Cc}{\mathcal{C}}
\def\Sc{\mathcal{S}}
\def\Zc{\mathcal{Z}}
\def\cch{\mathrm{CCh}}
\renewcommand\le{\leqslant}
\renewcommand\ge{\geqslant}
\def\0{\varnothing}
\begin{document}

\date{November 22, 2020}

\title[A model of the cubic connectedness locus]{A model of the cubic connectedness
locus}

\author[A.~Blokh]{Alexander~Blokh}

\thanks{The second named author was partially
supported by NSF grant DMS--1807558}

\author[L.~Oversteegen]{Lex Oversteegen}

\author[V.~Timorin]{Vladlen~Timorin}

\thanks{The third named author has been supported by the HSE University Basic Research Program and Russian Academic Excellence Project '5-100'.}

\address[Alexander~Blokh and Lex~Oversteegen]
{Department of Mathematics\\ University of Alabama at Birmingham\\
Birmingham, AL 35294}

\address[Vladlen~Timorin]
{Faculty of Mathematics\\
HSE University\\
6 Usacheva str., Moscow, Russia, 119048}

\email[Alexander~Blokh]{ablokh@uab.edu}
\email[Lex~Oversteegen]{overstee@uab.edu}
\email[Vladlen~Timorin]{vtimorin@hse.ru}

\subjclass[2010]{Primary 37F20; Secondary 37F10, 37F50}

\keywords{Complex dynamics; laminations; Mandelbrot set; Julia set}

\begin{abstract}

We construct a 
model of the cubic connectedness locus.


\end{abstract}

\maketitle

\section{Introduction}
The concepts of \emph{renormalization} and \emph{tuning}
appear in the context of the
Mandelbrot set $\Mc_2=\{c\in\C\mid Q_c^n(0)\not\to\infty\}$,
where $Q_c(z)=z^2+c$, and serve to explain a self-similar structure of $\M_2$.
\emph{Self-similarity} of $\Mc_2$ means, in particular, that there are infinitely many homeomorphic copies of $\Mc_2$ in $\Mc_2$,
 the so-called \emph{baby Mandelbrot sets}.
Baby Mandelbrot sets accumulate to any boundary point of $\Mc_2$.
If $c$ is in a baby Mandelbrot set, then $Q_c$ is obtained from another quadratic polynomial by ``tuning'',
i.e. consistently pinching closures of periodic Fatou components and their pullbacks.
A baby Mandelbrot set consists of all tunings of a given polynomial different from $z\mapsto z^2$ and is contained in a unique maximal one.

If we collapse the closure of the main cardioid and all maximal baby Mandelbrot sets, we will obtain a dendrite $D(\Mc_2)$
that reveals the \emph{macro}-structure of $\Mc_2$.
A self-similar description of $\M_2$ involves knowing $D(\Mc_2)$
together with a subset of marked points in $D(\Mc_2)$,
where each marked point is a collapsed maximal baby Mandelbrot set.

In this paper, we use these ideas to give a model of the cubic connectedness locus $\Mc_3$.
Basically, we consider, for every polynomial $f$, its \emph{chief} $g$,
that is, a polynomial such that $f$ is a tuning of $g$,
and $g$ is not a non-trivial tuning of another polynomial.
Our model for $\Mc_3$  relies upon studying chiefs of cubic polynomials.
For $d=2$, the chiefs define maximal baby Mandelbrot sets. Thus,
our approach gives a
macro-view of $\Mc_3$, similar to $D(\Mc_2)$.

We are not aware of any other models of $\M_3$.

\section{Statement of the main results}
The \emph{parameter space} of complex degree $d$ polynomials $P$ is the
space of affine conjugacy classes $[P]$ of these polynomials. The
\emph{connectedness locus} $\Mc_d$ consists of classes of all degree
$d$ polynomials $P$, whose Julia sets $J_P$ are connected. The
\emph{Mandelbrot set} $\Mc_2$ has a complicated self-similar structure
understood through the ``pinched disk'' model \cite{dh82,
hubbdoua85,thu85}.

In this paper, we find a combinatorially defined upper semi-continuous
(USC) partition of $\Mc_3$. A property of a polynomial is
\emph{combinatorial} if it can be stated based only upon knowing which
pairs of rational external rays land at the same point and which pairs
do not. A combinatorial USC partition of $\Mc_3$ yields a continuous
map of $\Mc_3$ to a quotient space of $\crp$, the space of unordered
cubic critical portraits. Let us describe our approach.

Let $P$ be such that $[P]\in \M_3$. A point $x$ is \emph{($P$-)legal}
if it eventually maps to a repelling periodic point.  An unordered pair
of rational angles $\{\al,\be\}$ is \emph{($P$-)legal} if the external
rays with arguments $\al$ and $\be$ land at the same legal point of
$P$. Write $\Zc_P$ for the set of all $P$-legal pairs of angles; call
$\Zc_P$ the \emph{l-set} of $P$. Such sets are closely related to
\emph{rational} and \emph{real laminations} of polynomials introduced
by J. Kiwi \cite{kiw01, kiwi97}.

A cubic polynomial $P\in \M_3$ is \emph{visible} if $\Zc_P\ne\0$ and
\emph{invisible} otherwise.
 If
$P$ is visible, denote by $\Cc_P$ the set of all critical portraits
\emph{compatible} with $\Zc_P$ (i.e., no critical chord from a critical
portrait in $\Cc_P$  separates a $P$-legal pair of angles). Clearly,
$\Cc_P$ is closed. The set $\Cc_P$ is \emph{the combinatorial
counterpart} of $P$.

In this paper we will define, for every $[P]\in\M_3$, a \emph{closed} subset
$\A_P$ of $\crp$ called an \emph{alliance}. The main properties of
alliances are:
\begin{enumerate}
  \item if $P$ is visible, then $\Cc_P\subset\A_P$;
  \item distinct alliances are disjoint;
  \item the alliances form an USC partition of $\crp$.
\end{enumerate}

One special alliance is said to be \emph{prime}. It contains $\Cc_P$
for all visible polynomials $P$ with a non-repelling fixed point, some
other combinatorial counterparts, and is associated with all invisible
polynomials $P$.
The other alliances are called \emph{regular}; they
are combinatorial counterparts of certain visible polynomials,
and there are uncountably many of them.

\begin{thmM}
All alliances form a USC partition $\{\A_P\}$ of $\crp$. The union of
regular alliances is open and dense in $\crp$. The map $P\mapsto \A_P$
is continuous and maps $\Mc_3$ onto the quotient space $\crp/\{\A_P\}$.
\end{thmM}


Thurston \cite{thu85} gave a detailed, 
conjecturally homeomorphic, model of the
quadratic connected locus $\M_2$. The situation with the cubic
connectedness locus $\M_3$ is different. Indeed, $\M_3$ is complex
2-dimensional. Cubic polynomials are richer dynamically than quadratic
ones (critical points are essential for the dynamics of polynomials,
and cubic polynomials generically have two critical points) which makes
the cubic case highly intricate combinatorially \cite{bopt14,bopt18}.
This results into a breakdown of some crucial steps of \cite{thu85}
(e.g., cubic invariant laminations admit wandering triangles
\cite{bo04, bowaga}). Also, $\M_3$ is known to be non-locally connected
\cite{la89} and to contain copies of various non-locally connected
quadratic Julia sets \cite{bh01}.
All this makes the cubic case much harder and significantly complicates a complete  description of $\M_3$.

\section{Critical portraits and laminations}\label{s:criplam}
We assume familiarity with complex polynomial dynamics, including Julia
sets, external rays, etc. All cubic polynomials in this paper are
assumed to be \emph{monic}, i.e., of the form $z^3+$ a quadratic
polynomial, and to have connected Julia sets. We can parameterize the
external rays of a cubic polynomial $P$ by \emph{angles}, i.e.,
elements of $\R/\Z$. The external ray of argument $\theta\in\R/\Z$ is
denoted by $R_P(\theta)$. Clearly, $P$ maps $R_P(\theta)$ to
$R_P(3\theta)$.

For sets $A$, $B$, let $A\vee B$ be the set of all unordered pairs
$\{a,b\}$ with $a\in A, b\in B$; thus, the \emph{l-set} $\Zc_P$ of $P$
consists of all pairs $\{\al,\be\}\in(\mathbb{Q}/\Z)\vee
(\mathbb{Q}/\Z)$ such that $R_P(\al)$ and $R_P(\be)$ land at the same
legal point of $P$.

A \emph{chord} $\ol{ab}$ is a closed segment connecting points $a, b$
of the unit circle $\uc=\{z\in\C\,|\,|z|=1\}$. If $a=b$, the chord is
\emph{degenerate}. Two \emph{distinct} chords of $\disk$ \emph{cross}
if they intersect in $\disk$ (alternatively, they are called
\emph{linked}). Sets of chords are \emph{compatible} if chords from
distinct sets do not cross.

Write $\si_d$ for the self-map of $\uc$ that takes $z$ to $z^d$. A
chord $\ol{ab}$ is ($\si_d$-) \emph{critical} if $\si_d(a)=\si_d(b)$.
Let $\cch$ be the set of all $\si_3$-critical chords with the natural
topology; $\cch$ is homeomorphic to $\uc$. A \emph{critical portrait}
is a pair $\{\oc,\oy\}\in\cch\vee\cch$ such that $\oc$ and $\oy$ do not
cross. Let $\crp$ be the space of all critical portraits. It is
homeomorphic to the M\"obius band \cite{tby20}.

Motivated by studying l-sets of visible polynomials, Thurston
\cite{thu85} defined \emph{invariant laminations} as families of chords
with certain dynamical properties. We use a slightly different approach
(see \cite{bmov13}).

\begin{dfn}[Laminations]\label{d:geolam}
A \emph{prelamination} is a family $\lam$ of chords
called \emph{leaves} such that distinct leaves are unlinked and all
points of $\uc$ are leaves. If, in addition,  the set $\lam^+=\bigcup_{\ell\in\lam}\ell$
is compact, then $\lam$ is called a
\emph{lamination}.
\end{dfn}

From now on $\lam$ denotes a lamination (unless we specify that it is a
pre\-la\-mi\-nation). \emph{Gaps} of $\lam$ are the closures of
components of $\disk\sm\lam^+$. A gap $G$ is \emph{countable $($finite,
uncountable$)$} if $G\cap\uc$ is countable and infinite (finite,
uncountable).
Uncountable gaps are called \emph{Fatou} gaps.
For a closed convex set $H\subset \C$, maximal  straight segments in $\bd(H)$ are called \emph{edges} of $H$.

Convergence of prelaminations $\lam_i$ to a set of chords $\mathcal E$
is 
understood as Hausdorff convergence of leaves of $\lam_i$ to
chords from $\mathcal E$.
Evidently, $\mathcal E$ is a lamination.
A lamination $\lam$ is \emph{nonempty} if it contains nondegenerate leaves, otherwise it is \emph{empty} (denoted $\lam_\0$).
Say that $\lam$ is \emph{countable}
if it has countably many nondegenerate leaves and \emph{uncountable}
otherwise; $\lam$ is \emph{perfect} if it has no isolated leaves (thus, Fatou gaps of perfect laminations have no critical edges).

If $G\subset\cdisk$ is the convex hull of $G\cap\uc$, define $\si_d(G)$
as the convex hull of $\si_d(G\cap\uc)$. \emph{Sibling
($\si_d$)-invariant laminations} modify Thurston's \cite{thu85}
invariant geodesic laminations. A \emph{sibling of a leaf $\ell$} is a
leaf $\ell'\ne \ell$ with $\si_d(\ell')=\si_d(\ell)$. Call a leaf
$\ell^*$ such that $\si_d(\ell^*)=\ell$ a \emph{pullback} of $\ell$.
Note that the map $\si_d$ can be extended continuously over $\lam^+$ by extending linearly over all leaves  of $\lam$.
We also denote this extended map by $\si_d$.

\begin{dfn}[\cite{bmov13}]\label{d:sibli}
A (pre)lamination $\lam$ is \emph{sibling ($\si_d$)-invariant} if
\begin{enumerate}
  \item for each $\ell\in\lam$, we have $\si_d(\ell)\in\lam$,
  \item for each $\ell\in\lam$ there exists $\ell^*\in\lam$ with $\si_d(\ell^*)=\ell$,
  \item for each $\ell\in\lam$ such that $\si_d(\ell)$ is a nondegenerate leaf,
  there exist $d$ \textbf{pairwise disjoint} leaves $\ell_1$, $\dots$, $\ell_d$ in $\lam$ such that
  $\ell_1=\ell$ and $\si_d(\ell_1)=\dots=\si_d(\ell_d)$.
\end{enumerate}
\end{dfn}

Collections of leaves from (3) above are \emph{full sibling
collections}. Their leaves cannot intersect even on $\uc$. By cubic
(resp., quadratic) laminations, we always mean sibling $\si_3$-(resp.,
$\si_2$-) invariant laminations. When dealing with cubic laminations,
we write $\si$ instead of $\si_3$. From now on $\lam$ (possibly with
sub- and superscripts) denotes a cubic sibling invariant
(pre)lamination.

These are properties of cubic sibling invariant laminations \cite{bmov13}:
\begin{description}
  \item[gap invariance] if $G$ is a gap of $\lam$, then $H=\si(G)$
      is a leaf of $\lam$ (possibly degenerate), or a gap of
      $\lam$, and in the latter case, the map $\si|_{\bd(G)}:\bd(G)\to
      \bd(H)$ is an orientation
      preserving composition of a monotone map and a covering map
      (gap invariance is a part of Thurston's original definition
      \cite{thu85});
  \item[compactness] if a sequence of sibling invariant
      prelaminations converges to a set of chords $\mathcal{A}$,
      then $\mathcal{A}$ is a sibling invariant lamination.
\end{description}

A chord $\ell$ is \emph{inside} a gap $G$ if $\ell$ is, except for the
endpoints, in the interior of $G$. A gap $G$ of $\lam$ is
\emph{critical} if either all edges of $G$ are critical, or there is a
critical chord inside $G$. A \emph{critical set} of $\lam$ is a
critical leaf or a critical gap.
We also define a \emph{lap} of $\lam$ as either a finite gap of $\lam$ or a nondegenerate leaf of $\lam$
 not on the boundary of a finite gap.

The following facts are well-known (see, e.g., \cite{bl02} or \cite{bopt20}).
Fatou gaps of $\si_d$-invariant laminations are (pre)periodic.
If $U$ is a $\si_d$-periodic Fatou gap of period $n$ and the map $\si_d^n|_{\bd(U)}$ is of degree $k>1$,
then there is a monotone map from $\bd(U)$ to $\uc$ collapsing edges of $U$ and semiconjugating
$\si_d^n|_{\bd(U)}$ with $\si_k$.
Call Fatou gaps from the cycle of $U$ \emph{periodic Fatou gaps of degree $k$}.
If now $\si_d^n|_{\bd(U)}$ is of degree $1$, then the monotone map collapsing edges of $U$ can be chosen to
semiconjugate $\si_d^n|_{\bd(U)}$ and an irrational rotation of $\uc$.
In this case, $U$ is a \emph{Siegel gap}.
In any cycle of Siegel gaps some will have critical edges.
In general, if $\ell$ is a critical edge of a Fatou gap, then $\ell$ is isolated.

\begin{lem}\label{l:limleaf}
Suppose that $\lam_i\to \lam$ are $\si_d$-invariant laminations, and let $G$ be a periodic lap of $\lam$.
Then $G$ is also a lap of $\lam_i$ for all sufficiently large $i$.
\end{lem}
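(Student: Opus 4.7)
The plan is to split on whether $G$ is a periodic leaf or a periodic finite gap; in either case the key ingredient is a stability statement for periodic leaves, namely: if $\ell \in \lam$ is a periodic leaf and $\lam_i \to \lam$, then $\ell \in \lam_i$ for all large $i$. Granting this, suppose $G$ is a finite periodic gap of period $n$ with vertex set $V = \{v_1, \ldots, v_k\}$. Since $\si^n$ permutes $V$, some iterate fixes each edge of $G$ pointwise, so the edges are periodic leaves; by the stability claim, each edge lies in $\lam_i$ for large $i$. Any remaining leaf of $\lam_i$ strictly inside $G$ would be a chord whose interior meets $\Int(G)$, forcing both endpoints into $\uc \cap G = V$; that is, a diagonal of $G$. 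There are only finitely many diagonals, so if one diagonal $m$ appears in $\lam_i$ for infinitely many $i$, then by Hausdorff convergence $m \in \lam$, contradicting that $G$ is a gap of $\lam$. Hence $\Int(G)$ is a component of $\disk \setminus \lam_i^+$, and $G$ is a gap, hence a lap, of $\lam_i$.

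To prove the stability claim, write $\ell = \ol{pq}$, enlarge $n$ so that $\si^n$ fixes both $p$ and $q$ pointwise, and recall that $\si^n$ is then a local expansion by factor $d^n$ near each endpoint. By convergence choose $\ell_i = \ol{a_i b_i} \in \lam_i$ with $a_i \to p$ and $b_i \to q$, and suppose for contradiction that $\ell_i \ne \ell$ along a subsequence. Set $u_i = a_i - p$ and $v_i = b_i - q$ as signed arc displacements on $\uc$. Comparing the cyclic order on $\uc$ of the four points $a_i$, $\si^n(a_i)$, $b_i$, $\si^n(b_i)$ produces a dichotomy: if $u_i v_i > 0$, the four points alternate, so the leaves $\ell_i$ and $\si^n(\ell_i)$ in $\lam_i$ would cross, contradicting that $\lam_i$ is a lamination; if $u_i v_i < 0$, then $\si^n(\ell_i)$ is nested strictly inside (or outside) $\ell_i$. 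In the nested case I would iterate sibling invariance backward: at each step, property~(3) supplies $d$ pairwise disjoint pullbacks in $\lam_i$ of the current leaf, and by convergence these pullbacks approximate the corresponding $d$ pullbacks of the analogous leaf in $\lam$. Selecting at each step the pullback produced by the near-$p$ and near-$q$ inverse branches of $\si^n$ (which contract by $1/d^n$), I obtain leaves $m_i^{(k)} \in \lam_i$ with $m_i^{(k)} \to \ell$ as $k \to \infty$. Hence every point of $\ell$ lies in $\lam_i^+$; since any chord of $\lam_i$ through an interior point of $\ell$ either coincides with $\ell$ or crosses it, it must coincide, forcing $\ell \in \lam_i$ and contradicting the supposition.

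The main obstacle is the nested regime $u_i v_i < 0$, where no immediate crossing arises. Sibling invariance~(2) alone guarantees only the existence of some $n$-step pullback of $\ell_i$ in $\lam_i$, not specifically the near-$\ell$ one; the essential work is to use property~(3) together with unlinkage: the $d^n$ preimages of a point near $p$ split into one close preimage and $d^n - 1$ preimages bounded away from $p$, and the siblings of $\ell$ in $\lam$ under $\si$ are specific leaves bounded away from $\ell$. These rigidity properties should let me identify, at each step, the unique sibling pullback close to $\ell$ and iterate without the sequence drifting to unwanted branches.
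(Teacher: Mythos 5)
Your reduction of the finite-gap case to a stability claim for periodic leaves (edges persist in $\lam_i$, and a diagonal recurring in infinitely many $\lam_i$ would survive into $\lam$ and contradict $G$ being a gap) is clean and would work if the stability claim were proved. The crossing half of your dichotomy is also correct: if the endpoint displacements $u_i,v_i$ have the same sign, then $\ell_i$ and $\si_d^n(\ell_i)$ are linked leaves of $\lam_i$, which is impossible. The genuine gap is exactly where you flag it: the nested regime. Your plan there is to produce, inside a \emph{fixed} $\lam_i$, a backward orbit $m_i^{(k)}\to\ell$ by always choosing "the near-$\ell$ pullback." But sibling invariance only guarantees that \emph{some} full sibling collection of pullbacks of the current leaf lies in $\lam_i$; a full sibling collection is a non-crossing perfect matching of the $d$ preimages of one endpoint with the $d$ preimages of the other, and for $d\ge 3$ there are several such matchings. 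Nothing forces the matching realized in $\lam_i$ to agree with the one realized in $\lam$, so the pullback of $\ell_i$ with endpoints near the corresponding backward-orbit leaf of $\ell$ need not belong to $\lam_i$. The fact that $\lam_i\to\lam$ does give you \emph{some} leaf of $\lam_i$ near each leaf of the backward orbit of $\ell$, but those leaves need not be pullbacks of $\ell_i$, so the chain does not close up. This selection problem is the entire content of the nested case, and your proposal leaves it as an unproved "rigidity property." A secondary omission: when $G$ is a leaf, your claim only yields $G\in\lam_i$, whereas the lemma asserts $G$ is a \emph{lap} of $\lam_i$, i.e., not on the boundary of a finite gap of $\lam_i$; this needs a separate (short) argument.

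The paper avoids the pullback selection entirely by arguing forward with the gap structure. It takes laps $G_i$ of $\lam_i$ with $G_i\to G$ and edges $\ell_i\to\ell$, and rules out both bad configurations using expansion of $\si_d^k$ at the periodic vertices together with the fact that a leaf of $\lam_i$ cannot enter the interior of a gap of $\lam_i$: if $\ell_i$ crossed $\ell$, then $\si_d^k(\ell_i)$ would cross $\ell_i$; if $\ell_i$ poked into $\Int(G)$, then $\si_d^k(\ell_i)$ would be pushed deeper and meet $\Int(G_i)$; hence $G\subset G_i$, and if the inclusion were strict then $\si_d^k(G_i)\supsetneqq G_i$, impossible for two laps of one lamination. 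In other words, the nested configuration is killed by iterating \emph{forward} and letting the receding image collide with the gap $G_i$ itself --- the object your leaf-by-leaf approach discards. If you want to keep your structure, the most economical repair is to import that idea: in the nested case, let $H_i$ be the gap of $\lam_i$ on the $\ell$-side of $\ell_i$ and compare $H_i$ with $\si_d^n(H_i)$, rather than trying to control individual pullbacks.
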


\begin{proof}
Let $G$ be a lap and $\ell$ an edge of $G$; we write $k$ for the
minimal period of $\ell$. Then $\lam_i$, for large $i$, must have a lap
$G_i$ with $G_i\to G$. Choose an edge $\ell_i$ of $G_i$ so that
$\ell_i\to \ell$. Then $\ell_i$ does not cross $\ell$ for large $i$ as
otherwise the leaves $\si_d^k(\ell_i)$ and $\ell_i$ would cross.
Moreover, $\ell_i$ is disjoint from the interior of $G$ for large $i$
as otherwise $\si_d^k(\ell_i)$ would intersect the interior of $G_i$
(note that $\ell_i$ maps farther away from $\ell$ under $\si_d^k$). By
way of contradiction assume that $\lam_i$ do not contain $G$. Then
$G_i\supsetneqq G$ and $\ell_i\ne \ell$ for at least one edge $\ell$ of
$G$. It follows that $\si_d^k(G_i)\supsetneqq G_i$, a contradiction.
\end{proof}

Let us define  laminational analogs of the  sets $\Cc_P$.

\begin{dfn}\label{d:cc-lam}
For a given $\si_d$-invariant $\lam$, let $\Cc(\lam)$ be the family of all critical
portraits, compatible with $\lam$; if $\K\in \Cc(\lam)$ we say that
$\K$ is a critical portrait \emph{of} $\lam$.
\end{dfn}

A lamination $\lam$
is \emph{clean} if any pair of distinct non-disjoint leaves of $\lam$ is on the
boundary of a finite gap. Clean laminations give rise to equivalence
relations: $a\sim_\lam b$ if either $a=b$ or $a$, $b$ are in the same
lap of $\lam$. In that case the quotient $\uc/\sim_\lam=J_\lam$ is called a \emph{topological Julia set} and the  map $f_\lam:J_\lam\to J_\lam$, induced by $\si_d$,  is called a \emph{topological polynomial}.
By Lemma 3.16 of \cite{bopt16a}, any clean lamination
has the following property: if one endpoint of a
leaf is periodic, then the other endpoint is also periodic with the
same minimal period. Limits of clean $\si_d$-invariant laminations are
called \emph{limit laminations}, cf. \cite{bopt15} (e.g., clean laminations
are limit laminations).

\begin{dfn}[Perfect laminations \cite{bopt20}]\label{perfect}
The maximal perfect subset $\lam^p$ of $\lam$ is called the
\emph{perfect part} of $\lam$; a lamination $\lam$ is \emph{perfect} if
$\lam=\lam^p$.
\end{dfn}

Equivalently, one can define $\lam^p$ as the set of all leaves
$\ell\in \lam$ such that arbitrarily close to $\ell$ there are
uncountably many leaves of $\lam$. Evidently, perfect laminations are clean and, hence, limit laminations.

\begin{dfn}[Chiefs]
If $\lam$ is nonempty, a \emph{chief} of $\lam$ is defined
as a minimal,  by inclusion,  nonempty sublamination of $\lam$.
\end{dfn}

The next lemma follows from \cite{bopt20}.

\begin{lem}\label{l:perf-count}
The set $\lam^p$ is an invariant lamination. If $\lam$ is uncountable,
then $\lam^p\subset\lam$ is nonempty. A chief is perfect or
countable.
\end{lem}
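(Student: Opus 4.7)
The plan is to verify the three assertions in order, using the alternative characterization of $\lam^p$ as the set of leaves near which there are uncountably many leaves of $\lam$, and then to apply the result in its weakest form to chiefs.

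First, to see that $\lam^p$ is a lamination, I would show closure: if $\ell_n\in\lam^p$ converge to $\ell$, then any neighborhood of $\ell$ contains a neighborhood of some $\ell_n$, hence uncountably many leaves of $\lam$, so $\ell\in\lam^p$. For sibling invariance, the forward image is immediate, since $\si_d$ extends continuously to $\lam^+$ and carries a neighborhood of $\ell\in\lam^p$ onto a neighborhood of $\si_d(\ell)$, still containing uncountably many leaves of $\lam$. For the pullback axiom, given $\ell\in\lam^p$ I would take the $d$ pullbacks of $\ell$ (which exist in $\lam$ by sibling invariance) and use that uncountably many leaves near $\ell$ each have $d$ disjoint pullbacks in $\lam$; a pigeonhole argument on the neighborhoods of the $d$ pullbacks of $\ell$ produces uncountably many leaves near at least one pullback, placing it in $\lam^p$. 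The full sibling collection condition is handled similarly: choose a sequence of full sibling collections for nearby leaves $\ell_n\to\ell$ in $\lam^p$ and extract a Hausdorff-convergent subsequence; the limit is a full sibling collection whose members all lie in $\lam^p$ by the same pigeonhole idea.

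Second, the statement that uncountable $\lam$ has nonempty $\lam^p$ is a Cantor--Bendixson observation. I would view the set of leaves of $\lam$ as a subset of the separable metric space of chords; if every leaf had a countable neighborhood in $\lam$, a countable subcover would show $\lam$ is countable, contradicting the assumption. Hence there is a leaf $\ell$ with every neighborhood meeting uncountably many leaves, i.e., $\ell\in\lam^p$.

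Third, for chiefs: let $\lam'$ be a chief, i.e., a minimal nonempty sublamination of $\lam$. Then $(\lam')^p$ is, by the first part, a sublamination of $\lam'$. If $(\lam')^p$ is nonempty, minimality forces $(\lam')^p=\lam'$, so $\lam'$ is perfect. Otherwise, the contrapositive of the second part (applied to $\lam'$ itself) gives that $\lam'$ is countable.

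The main obstacle is the verification of the full sibling collection axiom for $\lam^p$: one has to argue that among nearby full sibling collections in $\lam$ there is a limit in which \emph{every} sibling, not just one, is a condensation point of $\lam$. I expect this to follow by applying the pigeonhole trick simultaneously to all $d$ siblings, using compactness of the space of unordered $d$-tuples of disjoint chords to pass to a convergent subsequence whose limit is a full sibling collection inside $\lam^p$.
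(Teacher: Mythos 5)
Your parts (2) and (3) coincide with the paper's argument: the paper also gets nonemptiness of $\lam^p$ for uncountable $\lam$ from the Cantor--Bendixson/Lindel\"of observation (it just says ``by definition''), and the perfect-or-countable dichotomy for chiefs is exactly the same minimality argument. Where you genuinely diverge is part (1): the paper does not prove that $\lam^p$ is a sibling invariant lamination at all --- it cites Lemma 3.12 of \cite{bopt20} --- whereas you attempt a direct verification. Your closure, forward-invariance, and pullback arguments are sound (for forward invariance one should note that $\si_d$ is finite-to-one on nondegenerate chords, so uncountably many leaves near $\ell$ produce uncountably many distinct images; you implicitly use this).

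The one step you flag as the main obstacle is also the one place your justification does not hold up as stated. The set of unordered $d$-tuples of \emph{pairwise disjoint} chords is not closed, hence not compact: a convergent sequence of full sibling collections can a priori degenerate to a tuple in which two siblings share an endpoint, and then the limit is not a full sibling collection. So ``compactness of the space of unordered $d$-tuples of disjoint chords'' cannot be the reason the limit collection is legitimate. The correct argument is the one this paper itself uses in the proof of Lemma \ref{l:count1}(2): if two limit siblings $m_i$, $m_j$ shared an endpoint $p$, then for large $\alpha$ the approximating siblings $\ell_\alpha^i$, $\ell_\alpha^j$ would have \emph{distinct} endpoints arbitrarily close to $p$ with the same $\si_d$-image, which is impossible since distinct $\si_d$-preimages of a point are at definite distance from each other. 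With that substitution (and the refinement that one must take condensation points of the tuples, shrinking the neighborhood of $\ell$ so that the first coordinate converges to $\ell$ itself), your pigeonhole scheme does go through and recovers the cited lemma of \cite{bopt20}. As written, though, the key disjointness claim rests on a false compactness assertion, so this step needs repair before the proof of part (1) is complete.
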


\begin{proof} By Lemma 3.12 of \cite{bopt20},
the set $\lam^p$ is an invariant lamination. If $\lam$ is uncountable,
then by definition $\lam^p\subset\lam$ is nonempty. If a chief
$\lam$ is uncountable but not perfect, its perfect part
$\lam^p\subsetneqq \lam$ is a nonempty sublamination, a contradiction with the
assumption that $\lam$ is a chief.
\end{proof}

Given a chord $\ell=\ol{ab}$, let $|\ell|$ denote the length of the
smaller circle arc with endpoints $a$ and $b$ (computed with respect to the
Lebesgue measure on $\uc$ such that the total length of $\uc$ is 1);
call $|\ell|$ the \emph{length} of $\ell$.

\begin{lem}\label{l:compute}
Any nonempty lamination contains leaves of length $\ge \frac1{d+1}$.
\end{lem}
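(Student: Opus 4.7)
The plan is to pick any nondegenerate leaf $\ell$ of $\lam$ (which exists because $\lam$ is nonempty) and iterate it forward under $\si=\si_d$. By part~(1) of sibling invariance, each forward image $\ell_n=\si^n(\ell)$ is itself a leaf of $\lam$, so it suffices to exhibit some $\ell_n$ of length at least $\frac1{d+1}$.

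First I would handle the degenerate case: if $\ell_n$ ever becomes degenerate, then $\ell_{n-1}$ is a nondegenerate critical chord whose endpoints differ by $k/d$ for some $k\in\{1,\dots,d-1\}$, giving $|\ell_{n-1}|\ge\frac1d>\frac1{d+1}$, and we are done. I would then assume every $\ell_n$ is nondegenerate and suppose, toward a contradiction, that $|\ell_n|<\frac1{d+1}$ for all $n$.

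The heart of the argument is a length dichotomy for $\si_d$ applied to a short chord. When $|\ell_n|\le\frac1{2d}$, the endpoints of $\ell_n$ are close enough that $\si_d$ acts by dilation on the short arc between them, so $|\ell_{n+1}|=d|\ell_n|$. When $\frac1{2d}<|\ell_n|<\frac1{d+1}$, the quantity $d|\ell_n|$ lies in $(\frac12,\frac{d}{d+1})$, so the shorter arc between the images has length $1-d|\ell_n|$, whence $|\ell_{n+1}|=1-d|\ell_n|>\frac1{d+1}$, directly contradicting the standing assumption. In the first regime, lengths grow by the factor $d$ at each step, so the orbit cannot remain in $(0,\frac1{2d}]$ forever; it must eventually enter either $[\frac1{d+1},\frac12]$ (already contradicting the assumption) or the interval $(\frac1{2d},\frac1{d+1})$, which reduces to the second regime and produces a contradiction at the next iterate.

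The main subtlety, rather than a genuine obstacle, is the switch in the length formula at $|\ell|=\frac1{2d}$, which is what forces the two-case analysis. Once this is correctly identified, the rest is a short length-count exploiting the piecewise-linear action of $\si_d$ on chord length and forward invariance.
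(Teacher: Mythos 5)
Your proof is correct and is essentially the paper's argument: the paper's one-line proof is exactly the dichotomy that for a leaf with $|\ell|<\frac1{d+1}$ either $|\si_d(\ell)|=d|\ell|$ or $|\si_d(\ell)|\ge\frac1{d+1}$, combined with the (implicit) observation that lengths cannot grow by the factor $d$ forever. You have merely made explicit the threshold $\frac1{2d}$ at which the length formula switches and the handling of critical leaves, both of which the paper leaves to the reader.
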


\begin{proof}
Indeed, for a nondegenerate leaf $\ell$ so that $|\ell|<\frac{1}{d+1}$, either $|\si_d(\ell)|=d|\ell|$ or $|\si_d(\ell)|\ge\frac1{d+1}$.
\end{proof}

\begin{lem}\label{l:simple}
If $\lam$ is nonempty, then $\lam$ contains a chief.
\end{lem}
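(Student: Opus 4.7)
The plan is to invoke Zorn's lemma on the collection $\Ss$ of nonempty sublaminations of $\lam$ partially ordered by inclusion. Since a minimal element of $\Ss$ is, by definition, a chief of $\lam$, it suffices to show that every chain $\{\lam_\al\}_{\al\in A}\subset\Ss$ has a lower bound. The natural candidate is $\lam^*=\bigcap_\al\lam_\al$, and the proof splits into verifying that $\lam^*$ is a sibling $\si$-invariant lamination and that $\lam^*$ is nonempty.

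The key step is sibling invariance of $\lam^*$. The set $\lam^*$ is automatically a closed family of chords that contains every point of $\uc$ as a degenerate leaf, and clause (1) of Definition \ref{d:sibli} transfers trivially. Clauses (2) and (3) are existential and would, in general, fail to survive an arbitrary intersection. The decisive observation is finiteness: for any leaf $\ell\in\lam^*$, the set $P$ of all pullbacks of $\ell$ lying in $\lam$ is finite (at most nine, since $\si=\si_3$), and hence the set $\F_\ell$ of full sibling collections through $\ell$ in $\lam$ is also finite. For each $\al$, clauses (2) and (3) applied to $\lam_\al$ give $P\cap\lam_\al\ne\0$ and at least one collection in $\F_\ell$ contained in $\lam_\al$. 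Because $\{\lam_\al\}$ is totally ordered by inclusion, the families $\{P\cap\lam_\al\}_\al$ and $\{c\in\F_\ell: c\subset\lam_\al\}_\al$ are chains of nonempty subsets of finite sets, each of which has a minimum element and hence nonempty intersection. Any such minimum is a pullback (respectively, a full sibling collection) that lies in every $\lam_\al$, hence in $\lam^*$.

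For nonemptiness of $\lam^*$, apply Lemma \ref{l:compute} to each $\lam_\al$ to obtain a leaf of length at least $\frac{1}{d+1}$. The sets $K_\al=\{\ell\in\lam_\al:|\ell|\ge\frac{1}{d+1}\}$ are nonempty closed subsets of the compact space of chords of $\cdisk$, linearly ordered by inclusion, so the finite intersection property of compact spaces forces $\bigcap_\al K_\al\ne\0$. Any element of this intersection is a nondegenerate leaf that lies in $\lam^*$. Zorn's lemma then produces a minimal element of $\Ss$, which is the desired chief.

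The main obstacle is precisely the verification of clauses (2) and (3) for $\lam^*$, because an intersection of laminations need not preserve existential clauses in general. What rescues the argument is the finite-to-one nature of $\si_d$ on chords, which reduces the matter to the elementary combinatorial fact that a totally ordered family of nonempty subsets of a finite set has nonempty intersection.
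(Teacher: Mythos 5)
Your proof is correct and follows essentially the same route as the paper: apply Zorn's lemma to nonempty sublaminations, show that the intersection of a chain is again a sibling invariant lamination, and use Lemma \ref{l:compute} to keep the intersection nonempty. The only difference is that you spell out the verification of clauses (2) and (3) via finiteness of pullback sets and the compactness argument for nonemptiness, details the paper leaves implicit.
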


\begin{proof}
Let $\lam_\al$ be a nested family of laminations.
Definition \ref{d:sibli} implies
that then $\bigcap \lam_\al$ is a sibling invariant lamination too.
If all $\lam_\al$ are nonempty, then by Lemma \ref{l:compute} each of them has a leaf of length at least $\frac{1}{d+1}$ and so
$\bigcap \lam_\al$ is nonempty. Now the desired statement follows
from the Zorn lemma.
\end{proof}

The next lemma follows from the definitions and the compactness
property of invariant laminations.

\begin{lem}\label{l:dense}
Let $\lam$ be a chief. If $\ell\in \lam$ is a nondegenerate
leaf, then the iterated pullbacks of the nondegenerate iterated images
of $\ell$ are dense in $\lam$.
\end{lem}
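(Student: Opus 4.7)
My plan is to build, starting from $\ell$, a sibling invariant sublamination $S$ of $\lam$ whose leaves are exactly the iterated pullbacks of the nondegenerate iterates of $\ell$; apply compactness to pass to its closure $\ol S$ inside $\lam$; and then invoke minimality of the chief to conclude $\ol S=\lam$, which is precisely the claimed density.

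First I would define $S$ to consist of all chords of the form $\ell'\in\lam$ such that $\si^n(\ell')=\si^m(\ell)$ for some $n\ge 0$ and some $m\ge 0$ with $\si^m(\ell)$ nondegenerate (together with all points of $\uc$ as degenerate leaves). Since every such $\ell'$ is a leaf of $\lam$ and leaves of $\lam$ are pairwise unlinked, $S$ is a prelamination. Next I would verify the three clauses of Definition \ref{d:sibli} for $S$:
\textbf{(1)} forward invariance, by observing that if $\ell'\in S$ is an $n$-th pullback of $\si^m(\ell)$ then $\si(\ell')$ is an $(n{-}1)$-th pullback of the same leaf when $n\ge 1$, and equals $\si^{m+1}(\ell)$ when $n=0$ (which either lies in $S$ or is degenerate);
\textbf{(2)} pullback existence, since by sibling invariance of $\lam$ the leaf $\ell'\in S\subset\lam$ has some pullback $\ell^*\in\lam$, and $\ell^*$ is then a pullback of $\si^m(\ell)$ one level deeper, hence $\ell^*\in S$;
\textbf{(3)} the full sibling property, because any full sibling collection in $\lam$ containing $\ell'$ consists of three pairwise disjoint pullbacks of $\si(\ell')\in S$, and all pullbacks of leaves in $S$ are automatically in $S$ by construction.

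Having established that $S$ is a sibling invariant prelamination, I would then apply the compactness property quoted after Definition \ref{d:sibli} to the constant sequence $S,S,\dots$; this yields that the Hausdorff closure $\ol S$ (together with all degenerate leaves) is a sibling invariant lamination. Since each leaf of $S$ belongs to $\lam$ and $\lam^+$ is closed, every limit of leaves in $S$ is a chord lying in $\lam^+$ and so a leaf of $\lam$; thus $\ol S\subset\lam$. Finally, $\ol S$ is nonempty because $\ell\in S$, and $\lam$ is a chief, i.e.\ minimal among nonempty sibling invariant sublaminations. Hence $\ol S=\lam$, and $S$ is dense in $\lam$.

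The only step requiring any care is the verification of sibling invariance in clause (3): one must be sure that the three disjoint sibling leaves, which a priori are selected inside the ambient $\lam$, actually lie in $S$. This follows because they are all $\si$-pullbacks of the single leaf $\si(\ell')$, and $\si(\ell')$ has already been placed in $S$ by clause (1); the closure step and the minimality appeal are then formal. I do not expect genuine obstacles beyond bookkeeping about the case where some forward iterate $\si^m(\ell)$ becomes degenerate, which is handled by the convention that all points of $\uc$ are leaves of every (pre)lamination.
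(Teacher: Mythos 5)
Your argument is correct and is exactly the intended one: the paper offers no written proof, remarking only that the lemma ``follows from the definitions and the compactness property of invariant laminations,'' and your construction of the sibling invariant prelamination $S$, its closure via compactness, and the appeal to minimality of the chief is precisely the fleshed-out version of that remark. No genuinely different route is taken, and the details you check (forward invariance, pullbacks, full sibling collections, and $\ol S\subset\lam$) are the right ones.
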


\section{Invariant gaps and prime portraits}
\label{s:prime} An \emph{invariant} gap is an invariant gap of a cubic lamination
$\lam$, not necessarily specifying $\lam$.
An infinite invariant gap is \emph{quadratic} if it has degree 2. By Section 3 of
\cite{bopt16a}, any quadratic invariant gap can be obtained as follows.
A critical chord $\oc$ gives rise to the complementary circle arc
$L(\oc)$ of length $2/3$ with the same endpoints as $\oc$. The set
$\Pi(\oc)$ of all points with orbits in $\ol{L(\oc)}$ is nonempty,
closed and forward invariant. Let $\Pi'(\oc)$ be the maximal perfect subset of $\Pi(\oc)$.
The convex hulls $G(\oc)$ of $\Pi(\oc)$  and  $G'(\oc)$ of $\Pi'(\oc)$ are
 invariant quadratic gaps, and any invariant quadratic gap is of one of these
forms. For any invariant gap $G$, finite or infinite, a \emph{major} of
$G$ is an edge $M=\ol{ab}$ of $G$, for which there is a critical chord
$\ol{ax}$ or $\ol{by}$ disjoint from the interior of $G$. By Section
4.3 of \cite{bopt16a}, a degree 1 invariant gap has one or two majors;
every edge of $G$ eventually maps to a major and if $G$ is infinite and
of degree 1, at least one of its majors is critical. An invariant gap
$G$ is \emph{rotational} if $\si=\si_3$ acts on its vertices (i.e.,
on $G\cap\uc$) as a combinatorial rotation. 
For
brevity say that a chord is \emph{compatible} with a finite collection
of gaps if it does not cross edges of these gaps.

For a critical chord $\ell$, let $I(\ell)$ be the complement of
$\ol{L(\ell)}$. Let $\K=\{\oc, \oy\}$ be a critical portrait. Call $\K$
\emph{weak} if the forward orbit of $\oc$ is disjoint from $I(\oy)$, or
the forward orbit of $\oy$ is disjoint from $I(\oc)$; otherwise call
$\K$ \emph{strong}.

\begin{lem}\label{l:weaklosed}
The set of strong critical portraits is open and dense in $\crp$ while
the set of weak critical portraits is closed and nowhere dense in
$\crp$.
\end{lem}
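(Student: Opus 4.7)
The plan is to prove openness of the strong set and density of the strong set separately; the statements for the weak set then follow by taking complements. For openness, fix $n\ge 1$: the map $(\oc,\oy)\mapsto\si^n(\oc)\in\uc$ is continuous (note that $\si(\oc)$ is already a single point since $\oc$ is critical), and the open arc $I(\oy)$ of length $1/3$ depends continuously on $\oy$. Hence $U_n=\{(\oc,\oy)\in\crp:\si^n(\oc)\in I(\oy)\}$ is open, so the condition that the forward orbit of $\oc$ meets $I(\oy)$ is the open union $\bigcup_{n\ge 1}U_n$. The same argument applies with the roles of $\oc$ and $\oy$ swapped, and the strong set is the intersection of these two open sets. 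Thus strong is open and weak is closed.

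For density of strong, I will use Baire category in $\crp$, which is homeomorphic to the M\"obius band and hence is a compact Baire space. Fix a countable basis $\{V_i\}$ of open arcs of $\uc$. For each $i$ and $n$ the set $A_{i,n}^{\oc}=\{(\oc,\oy)\in\crp:\si^n(\oc)\in V_i\}$ is open by the same continuity argument. I claim $D_i^{\oc}=\bigcup_n A_{i,n}^{\oc}$ is dense: given a nonempty open $W\subset\crp$, pick a strictly unlinked portrait $(\oc_0,\oy_0)\in W$, available because portraits with distinct chords sharing no endpoint form an open dense subset of $\crp$. Near $(\oc_0,\oy_0)$ the space $\crp$ has a local product structure, so fixing $\oy=\oy_0$ yields an open arc $J\subset\cch\cong\uc$ of chords $\oc$ with $(\oc,\oy_0)\in W$. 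Since $\si$ expands lengths by a factor of $3$, the image $\si^n(J)$ covers $\uc$ for $n$ large enough and therefore meets $V_i$, producing the required $\oc\in J$.

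By symmetry, $D_i^{\oy}$ is open and dense as well. Baire's theorem then gives a dense $G_\delta$ set $R=\bigcap_i(D_i^{\oc}\cap D_i^{\oy})$, and every portrait $(\oc,\oy)\in R$ has both orbits $\{\si^n(\oc)\}_{n\ge 1}$ and $\{\si^n(\oy)\}_{n\ge 1}$ dense in $\uc$, so each orbit meets the corresponding open arc $I(\cdot)$. Hence $R$ is contained in the strong set, and strong is dense.

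The only subtle step is producing the open arc $J$ inside the slice $\{\oc:(\oc,\oy_0)\in W\}$, which rests on describing $\crp$ locally as a product of two open arcs in $\cch$ near a strictly unlinked portrait. Degenerate configurations---coinciding critical chords or chords with a common endpoint---form a nowhere dense subset of $\crp$ and can safely be avoided without affecting the Baire argument.
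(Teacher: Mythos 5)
The paper gives no proof of this lemma (it is explicitly ``left to the reader''), so there is nothing to compare against; judged on its own, your argument is correct. The openness half is exactly right: $\si^n(\oc)$ is a point depending continuously on $\oc$, $I(\oy)$ is an open arc of length $1/3$ moving continuously with $\oy$, so each $U_n$ is open, and the strong set is the intersection of the two open unions. For density, your Baire argument works: the key expansion step is sound because $\oc\mapsto\si(\oc)$ is a $3$-fold expanding covering $\cch\to\uc$, so an arc $J$ of critical chords has $\si^n(J)=\uc$ for large $n$; and the local product structure near a strictly unlinked portrait (interior of the M\"obius band, away from the boundary circle of shared-endpoint portraits) is legitimate and dense, so the slice $J$ exists. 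One remark: Baire category is heavier machinery than needed here --- density of the strong set already follows from intersecting just the \emph{two} open dense sets $\bigcup_n U_n$ and its swap, each handled by the same expansion argument with $V_i$ replaced by $I(\oy_0)$ directly. That said, the stronger statement you actually prove (a dense $G_\delta$ of portraits with both critical orbits dense in $\uc$) is precisely the fact invoked later in the paper in the proof of Lemma \ref{l:alli}, so the extra work is not wasted.
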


The proof of Lemma \ref{l:weaklosed} is left to the reader.

\begin{lem}\label{l:crit-must}
If $U$ is a degree one periodic infinite gap of $\si_d$ for some $d\ge 2$, then some image of $U$ has critical edges.
\end{lem}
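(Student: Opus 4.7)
My plan is to reduce to the invariant case by passing to the iterate $\si_d^n$. Since $U$ is $\si_d$-periodic of period $n$ and of degree one, it is an infinite gap invariant under $\si_d^n$ and of degree one there. Applying the invariant-case result cited in the excerpt (from Section~4.3 of~\cite{bopt16a}) to the map $\si_d^n$, we obtain a $\si_d^n$-critical major $M=\ol{ab}$ of $U$; that is, $M$ is an edge of $U$ with $\si_d^n(a)=\si_d^n(b)$.

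Next, let $k\in\{1,\dots,n\}$ be the smallest positive integer for which $\si_d^k(a)=\si_d^k(b)$, and set $p=\si_d^{k-1}(a)$, $q=\si_d^{k-1}(b)$. By minimality of $k$ we have $p\ne q$, while $\si_d(p)=\si_d(q)$, so $\ol{pq}$ is a $\si_d$-critical chord with endpoints in $U_{k-1}\cap\uc$, where $U_{k-1}=\si_d^{k-1}(U)$. If $\ol{pq}$ is already an edge of $U_{k-1}$, we are done, since $U_{k-1}$ is an image of $U$.

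The remaining, main case is when $\ol{pq}$ lies in the interior of $U_{k-1}$. Here I use degree-one monotonicity: from the assumption $\deg(\si_d^n|_{\bd(U)})=1$ it follows that each intermediate $\si_d\colon\bd(U_i)\to\bd(U_{i+1})$ has degree one, and being orientation-preserving on $\uc$-subarcs and affine on edges, the map $\si_d|_{\bd(U_{k-1})}$ is monotone. Hence the preimage of the point $\si_d(p)$ in $\bd(U_{k-1})$ is a connected arc $A$ containing both $p$ and $q$. Since $\si_d$ does not collapse any non-degenerate $\uc$-subarc, and sends every non-critical edge to a non-degenerate edge, $A$ must consist only of critical edges joined at single vertices. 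As $p\ne q$, the arc $A$ is non-degenerate and therefore contains at least one critical edge of $U_{k-1}$, as required.

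The main obstacle is verifying the monotonicity of $\si_d|_{\bd(U_{k-1})}$ and then establishing the structural description of the collapsing arc $A$. Both rely only on $\si_d$ being a $d$-fold covering of $\uc$ that preserves orientation, together with its affine extension on leaves, so no extra input beyond the piecewise structure of $\bd(U_{k-1})$ is needed.
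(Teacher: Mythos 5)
Your route is genuinely different from the paper's. The paper argues by contradiction with a direct dynamical argument: if no gap in the orbit of $U$ had critical edges, then $U$ would have periodic edges; taking a maximal chain $L$ of concatenated $\si_d^n$-fixed edges of $U$, points of $\bd(U)$ adjacent to $L$ are repelled from $L$ and, since $\si_d^n|_{\bd(U)}$ has degree one, must be attracted to some $\si_d^n$-fixed point of $\bd(U)$, which contradicts the expansion of $\si_d$ on $\uc$ unless a subarc of $\bd(U)$ collapses. You instead import the invariant case from the literature and then descend from $\si_d^n$-criticality to $\si_d$-criticality. Your descent is correct and in fact makes explicit a step the paper leaves implicit in its final sentence. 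Note also that your ``main case'' is vacuous: by gap invariance applied $k-1$ times, the nondegenerate image $\si_d^{k-1}(M)=\ol{pq}$ of the edge $M$ is automatically an edge of $U_{k-1}$ (a nondegenerate chord with both endpoints on $\uc$ contained in the boundary of the convex hull of a subset of $\uc$ must be an edge), so the monotonicity and collapsing-arc discussion, while harmless, is not needed.

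The one weak link is your first step. The statement you quote from Section 4.3 of \cite{bopt16a} (an infinite invariant gap of degree one has a critical major) is established there for \emph{cubic} laminations, i.e., for $\si_3$-invariant gaps. You apply it to $U$ viewed as an invariant gap of $\si_d^n=\si_{d^n}$, which requires the result for arbitrary degree; that general-degree invariant case is essentially the content of the present lemma restricted to invariant gaps, so as written your reduction assumes the substantive part of what is to be proven. To close this you would need either to cite a genuinely degree-independent version of the major structure (e.g., from \cite{bopt20}) or to supply an argument for the invariant case directly --- which is precisely what the paper's repulsion/attraction argument provides, and it does so uniformly in $d$ and in the period.
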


\begin{proof}
It is well-known that every edge of $U$ eventually maps to a critical or a periodic leaf.
If gaps from the orbit of $U$ have no
critical edges, it must have some periodic edges. Let $\ell_1, \dots,
\ell_k$ be a maximal chain of concatenated edges of $U$ such that
$\si_d^n(\ell_i)=\ell_i, 1\le i\le k$. Then $\si_d^n$ restricted to a
small arc $I\subset \bd(U)$ adjacent to $L=\bigcup^k_{i=1} \ell_i$
repels points away from the appropriate endpoint of $L$. Since
$\si_d|_{\bd)U)}$ is of degree 1, then points of $I$ are attracted to a
$\si_3^n$-fixed point $x\in \bd(U)$. Since $\si_3$ is expanding on
$\uc$, this implies that a subarc of $\bd(U)$ must collapse to a point
under $\si_d^n$. Hence some image of $U$ has critical edges.
\end{proof}

Together with results of \cite{bopt16a}, Lemma \ref{l:crit-must}
implies Lemma \ref{l:prime-1}.

\begin{lem}\label{l:prime-1}
A critical portrait $\{\oc, \oy\}$ is compatible with an invariant
quadratic gap if and only if it is weak. Similarly, $\{\oc, \oy\}$ is
compatible with an infinite invariant gap if and only if it is weak.
\end{lem}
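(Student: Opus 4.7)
The two equivalences reduce, since any invariant quadratic gap is an infinite invariant gap, to proving (a) that weak implies compatible with an invariant quadratic gap, and (b) that compatible with an infinite invariant gap implies weak.

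For (a), assume $\K=\{\oc,\oy\}$ is weak; WLOG the forward orbit of $\si(\oc)$ lies in $\ol{L(\oy)}$. The plan is to show that $\K$ is compatible with $G=G(\oy)$, the invariant quadratic gap constructed in Section~3 of \cite{bopt16a}. The chord $\oy$ is compatible with $G$ because $\Pi(\oy)\subseteq \ol{L(\oy)}$, so $G$ sits in the closed half-disk bounded by $\oy$ on the $L(\oy)$-side. For $\oc$: since $\oc$ and $\oy$ are non-crossing critical chords of length $1/3$, both endpoints of $\oc$ must lie in $\ol{L(\oy)}$ (otherwise one endpoint would lie in the open arc $I(\oy)$ and the other, being at arc-distance $1/3$, would lie in $L(\oy)$, forcing $\oc$ to cross $\oy$). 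Combining this with the orbit condition gives that the endpoints of $\oc$ themselves belong to $\Pi(\oy)$ and are vertices of $G$; hence $\oc$ is a chord between two vertices of $G$ and does not cross any edge of $G$.

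For (b), let $\K$ be compatible with an infinite invariant gap $G$. By the classification in \cite{bopt16a} together with Lemma~\ref{l:crit-must}, $G$ has an associated critical chord $\oc'$ (with $G$ equal to $G(\oc')$ or $G'(\oc')$ in the quadratic case, and $\oc'$ a critical edge of $G$ in the degree-one case) such that the vertex set of $G$ is contained in $\ol{L(\oc')}$. Let $\sigma=\ol{ab}$ be the edge of $G$ bridging the gap of $\Pi(\oc')$ through $I(\oc')$, so $a,b\in\Pi(\oc')$ and the open arc $(a,b)$ through $I(\oc')$ is disjoint from $\Pi(\oc')$; in particular $\sigma=\oc'$ when $\oc'$ itself is an edge of $G$. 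Non-crossing of $\oc,\oy$ with $\sigma$ and with the other edges of $G$ forces each of $\oc,\oy$ to lie either inside $G$ with endpoints in $\Pi(\oc')$, or in the region between $\sigma$ and the arc $(a,b)$.

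The geometric heart of the argument is that any two distinct critical chords with endpoints in $\Pi(\oc')$ are either linked or share an endpoint, since critical pairs of $\si|_{\ol{L(\oc')}}$ (a degree-two map on an arc of length $2/3$) always interleave. So in the generic case, one of $\oc,\oy$, say $\oy$, lies inside $G$ (so $\si^n(\oy)\in\Pi(\oc')$ for all $n\ge 1$) while $\oc$ lies in the region between $\sigma$ and $(a,b)$ (so $I(\oc)\subseteq (a,b)$); since $(a,b)\cap\Pi(\oc')=\emptyset$, the orbit of $\oy$ avoids $I(\oc)$ and $\K$ is weak. The main obstacle is the boundary configuration in which $\oc,\oy$ both lie inside $G$ and share an endpoint in $\Pi(\oc')$ (a configuration that requires three points of $\Pi(\oc')$ at pairwise arc-distance $1/3$), as well as the degenerate case $\oc=\oy$; here one must verify by direct inspection, using forward invariance of $\Pi(\oc')$ and the structure of its gap $(a,b)$, that the orbit of the common endpoint still misses the appropriate $I$-arc.
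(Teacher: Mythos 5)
Your direction (a) is fine and is essentially the paper's own first paragraph: weakness traps the forward orbit of one critical chord in $\Pi$ of the other, both endpoints of the second chord land in $\Pi$ as well, and the portrait is compatible with the quadratic gap $G(\cdot)$.

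Direction (b) contains a genuine gap, and it sits exactly where you flag it. First, the dichotomy you assert, that each of $\oc,\oy$ lies either inside $G$ with endpoints in $\Pi(\oc')$ or in the one region behind $\sigma$, is unjustified and fails for degree-one gaps: such a gap may have \emph{two} majors, only one of which is critical, and a critical chord compatible with $G$ may sit behind the non-critical major, which is not your $\sigma$ (while nothing can sit ``inside'' a degree-one gap). The paper's proof sidesteps all of this: it takes the critical edge $\ell$ of $U$, observes that three pairwise unlinked critical chords either partially coincide or form the all-critical triangle, concludes $U\cap\uc\subset\ol{L(\oc)}$ while the forward orbit of $\oy$ lies in $U\cap\uc$, and is done. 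Second, and more seriously, the case you defer (both chords inside $G$, sharing an endpoint) is not a routine verification --- the check you postpone actually fails. Take $a=3/16$ and $\oc'=\ol{(9/48)(25/48)}$. The orbit of $3a=27/48$ is $\{27/48,\,33/48,\,3/48,\,9/48\}$ and avoids $I(\oc')=(9/48,25/48)$, so all three points $9/48,\,25/48,\,41/48$ lie in $\Pi(\oc')$, and the critical chords $\oc=\ol{(25/48)(41/48)}$ and $\oy=\ol{(41/48)(9/48)}$ join vertices of the quadratic invariant gap $G(\oc')$ and hence cross none of its edges. Yet the common critical value orbit meets $I(\oc)=(25/48,41/48)$ (at $27/48$ and $33/48$) and also meets $I(\oy)=(41/48,9/48)$ (at $3/48$), so this portrait is strong. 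Thus your deferred case cannot be closed by ``direct inspection''; it is the real obstruction here, and under the paper's literal definition of compatibility (``does not cross edges'') it even puts pressure on the statement itself --- something to raise explicitly rather than assume away.
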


\begin{proof}
Suppose that $\K$ is weak and forward orbit $T$ of $\oy$ is disjoint
from $I(\oc)$; then $T\subset \Pi(\oc)$, and $\{\oc,\oy\}$ is
compatible with $G(\oc)$. Assume now that $\{\oc,\oy\}$ is compatible
with an infinite invariant gap $U$. If $U$ is quadratic, the claim
follows from the above given description. Let $U$ be of degree one and
let $\ell$ be a critical edge of $U$. Then either $\ell$ coincides
with, say, $\oy$, or $\oy,$ $\oc,$ and $\ell$ form a triangle. Thus, we
may assume that, say, $\oy$ is non-disjoint from $U$ and $T\subset
\bd(U)$. Since the degree of $U$ is one, $\oc$ is disjoint from the
interior of $U$. Clearly, $U\cap \uc$ cannot be contained in
$\ol{I(\oc)}$ as then there is no room for $\oy$ there;
hence $U\cap\uc\subset \ol{L(\oc)}$, we have $T\cap I(\oc)=\0$, and $\K$ is weak.
\end{proof}

Some laminations must have compatible weak critical portraits.

\begin{thm}\label{t:standard}
Suppose that a nonempty cubic $\lam$ has an infinite periodic gap $U$ and either $\si(U)=U$ or
$U$ shares an edge with a finite rotational lap of $\lam$.
Then there is a weak critical portrait compatible with $\lam$. 
\end{thm}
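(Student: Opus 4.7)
The plan is to exhibit a weak critical portrait $\{\oc,\oy\}$ compatible with $\lam$ by placing both chords in the ``pocket'' formed by $U$ (and, in the second hypothesis, by the rotational lap $R$), then arguing that the forward orbit of the collapse point $\si(\oc)$ is trapped on the long side $\ol{L(\oy)}$ of the other chord. The two hypotheses are handled separately.

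Suppose first that $\si(U)=U$. Then $U$ is an infinite invariant gap, and by Lemma \ref{l:prime-1} every critical portrait compatible with $U$---and hence with $\lam$---is automatically weak, so it suffices to produce one such portrait. When $U$ is quadratic, $U=G(\oc)$ or $G'(\oc)$ for some critical edge $\oc$ of $U$. Letting $a$ be an endpoint of $\oc$, the third vertex $a+\tfrac{2}{3}$ of the critical triangle through $\oc$ satisfies $\si(a+\tfrac{2}{3})=\si(a)$, which is the fixed collapse point; hence $a+\tfrac{2}{3}\in\Pi(\oc)$, and an iterated-pullback argument shows it lies in $\Pi'(\oc)$ as well, so a second critical chord $\oy$ with endpoint $a+\tfrac{2}{3}$ sits inside $\ol U$ and gives a compatible portrait. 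When $U$ has degree one, Lemma \ref{l:crit-must} supplies a critical edge of $U$ as $\oc$, and $\oy$ is produced inside $\ol U$ analogously.

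Suppose now that $U$ shares an edge $\ell$ with a finite rotational lap $R$ and $\si(U)\ne U$. Let $n$ be the period of $U$ and let $U_0=U, U_1, \dots, U_{n-1}$ be its $\si$-orbit; each $U_i$ is attached to $R$ along the edge $\ell_i=\si^i(\ell)$, and the set $W=\bigcup_i\ol{U_i}$ is forward invariant under $\si$. I will choose $\oc$ with $\oc\subset\ol{U_i}$ for some $i$: either as a $\si$-critical edge of some $U_j$ supplied by Lemma \ref{l:crit-must} (when the first-return degree of $U$ exceeds one), or as a chord between two vertices of $U_i$ at angular distance $\tfrac{1}{3}$, which exist because $U_i$ is infinite and its vertex set spans a sufficient arc of $\uc$. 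Then $\si(\oc)\in U_{i+1}\cap\uc$ and, inductively, the forward orbit of $\si(\oc)$ is contained in $W\cap\uc$. Finally I select a second critical chord $\oy$ compatible with $\lam$ and with $\oc$ so that $I(\oy)$ is disjoint from $W\cap\uc$, using the finite rotational vertex set of $R$ to position $\oy$ outside this orbit pocket. Then the forward orbit of $\si(\oc)$ misses $I(\oy)$, so $\{\oc,\oy\}$ is weak.

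The main obstacle is the second case when the first-return degree of $U$ equals one (the periodic Siegel-type situation): then no $U_j$ is forced to carry a $\si$-critical edge, and $\oc$ must be produced as an interior critical chord of some $U_i$. Arranging $\oy$ so that $I(\oy)$ simultaneously misses $W\cap\uc$, avoids the leaves of $\lam$, and does not cross $\oc$, requires a careful combination of the rotational geometry of $R$ with the angular distribution of vertices of the $U_i$'s, and this combinatorial balance is the heart of the argument.
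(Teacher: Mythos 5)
Your first case is right in outline: any critical portrait compatible with $\lam$ is automatically compatible with the invariant infinite gap $U$, hence weak by Lemma \ref{l:prime-1}. But the real content of the theorem is the second case, and there your argument has a genuine gap --- the one you yourself defer as ``the heart of the argument.'' You trap the forward orbit of the collapse point $\si(\oc)$ in $W\cap\uc=\bigcup_i \ol{U_i}\cap\uc$ and then ask for a critical chord $\oy$ with $I(\oy)$ disjoint from $W\cap\uc$. Nothing guarantees such a $\oy$ exists: $I(\oy)$ is an arc of length exactly $1/3$, while the vertex sets of the infinite gaps $U_i$ may be distributed so that every arc of length $1/3$ meets some $U_i\cap\uc$; the ``combinatorial balance'' you appeal to is not there in general. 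The paper avoids this entirely by anchoring $\oc$ at an endpoint of the corresponding image of $\ell$, i.e., at a vertex of the finite rotational lap. Then the forward orbit of $\si(\oc)$ lies in the finite, invariant vertex set of that lap rather than in the large set $W\cap\uc$, and the portrait is compatible with a quadratic invariant gap, hence weak by Lemma \ref{l:prime-1}. In your write-up the rotational lap is used only to ``position'' $\oy$, whereas it is precisely the device that makes the critical orbit small enough to control.

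Two smaller but real errors. First, you invoke Lemma \ref{l:crit-must} to supply a critical edge ``when the first-return degree of $U$ exceeds one'' and then describe the degree-one case as the one where no $U_j$ is forced to carry a critical edge; this is exactly backwards --- the lemma asserts that degree-one periodic infinite gaps must have critical edges in their orbit, and it is in the higher-degree case that the criticality may sit in the interior. Second, in the invariant degree-one subcase of your first part, the claim that a second critical chord can be ``produced inside $\ol U$ analogously'' fails: a degree-one invariant gap need not contain a second critical chord, and $a+\frac{2}{3}$ need not be one of its vertices. The fix is simply to take $\oy$ to be any critical chord compatible with $\lam$ and not crossing $\oc$; compatibility with $U$, and hence weakness, then comes for free.
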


\begin{proof}
By Lemma \ref{l:prime-1} we may assume that $U$ shares an edge $\ell$
with a finite rotational lap of $\lam$. Choose a critical chord $\oc$
in a gap from the orbit of $U$ with an endpoint coinciding with an
endpoint of the corresponding image of $\ell$. Choose a critical chord
$\oy\ne \oc$ compatible with $\lam$ and not crossing $\oc$. Then
$\{\oc,\oy\}$ is compatible with a quadratic invariant gap and, by Lemma
\ref{l:prime-1}, $\{\oc,\oy\}$ is weak.
\end{proof}

\emph{From now on, by a chief, we mean a chief of
some \textbf{limit} lamination.} A gap $G$ of a lamination is
\emph{invariant} if $\si(G)=G$ (with ``$=$'' rather than
``$\subset$'').

\begin{lem}\label{l:inv-exist}
Any chief has an invariant lap or an invariant infinite gap.
\end{lem}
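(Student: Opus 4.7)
My plan is to locate an invariant object by exploiting the $\si_3$-fixed point $0\in\uc$ (recall $\si(0)=0$). Given the chief $\lam$, I would set $A=\{a\in\uc: \ol{0,a}\in\lam\}$. Since $\si$ maps leaves of $\lam$ to leaves of $\lam$ and $\si(0)=0$, the set $A$ is forward invariant under the tripling action: $3A\subseteq A$.

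First I would treat the case $A=\{0\}$ (no nondegenerate leaf of $\lam$ ends at $0$). Modulo accumulation issues discussed below, I would argue there is a distinguished gap $G$ of $\lam$ containing $0$ on its boundary. The image $\si(G)$ is either a leaf or a gap; a leaf image would have to end at $0$, violating $A=\{0\}$. Hence $\si(G)$ is a gap containing $0$ on its boundary, and uniqueness would force $\si(G)=G$, producing an invariant lap (if $G$ is finite) or an invariant infinite gap.

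Next I would handle $A\supsetneq\{0\}$, where nondegenerate leaves end at $0$ and carve out sectors in $\disk$ each contained in a gap of $\lam$. The tripling action of $\si$ near $0$ permutes the sectors. Since $0$ is fixed, either the fixed leaf $\ol{0,1/2}$ lies in $\lam$ (giving an invariant lap if it is not on the boundary of a finite gap), or a finite $\si$-invariant polygon with $0$ as a vertex must emerge from the periodic orbit structure of $A$, again yielding an invariant lap. In subtle cases I would invoke the minimality of $\lam$: a $\si$-periodic but non-invariant configuration would generate a proper nonempty sibling-invariant sublamination, contradicting chief-hood.

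The main obstacle is the possibility that leaves of $\lam$ accumulate at $0$ without ending there, in which case the ``gap at $0$'' is not canonically defined. Resolving this would require a compactness argument on $\lam^+$ combined with sibling invariance; one expects either an invariant infinite gap to absorb the accumulating leaves, or else the minimality of the chief to rule out such accumulation entirely. If the analysis at $0$ proves inconclusive, I would run a parallel analysis at the other $\si_3$-fixed point $1/2$, or apply an analogous compactness argument to a different periodic structure of $\lam$ guaranteed by Lemmas \ref{l:compute} and \ref{l:dense}.
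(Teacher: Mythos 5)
There is a genuine gap, and it lies exactly where you flagged your ``main obstacle.'' Your strategy is anchored at the $\si_3$-fixed points $0$ and $1/2$ of the circle, but the invariant lap or gap promised by the lemma need not involve either of these points. Consider the Chebyshev-type cubic lamination consisting of all leaves $\ol{\theta\,(-\theta)}$ (the lamination of $z^3-3z$, whose pullback structure is generated by the invariant diameter $\ol{(1/4)(3/4)}$ and is minimal). Here both $0$ and $1/2$ are degenerate classes: neither is an endpoint of any nondegenerate leaf, each is a limit of leaves $\ol{\theta\,(-\theta)}$ shrinking to it, and consequently neither lies on the boundary of any gap. Your case $A=\{0\}$ therefore has no ``distinguished gap $G$ containing $0$ on its boundary,'' the fallback at $1/2$ fails for the same reason, and the invariant object ($\ol{(1/4)(3/4)}$) sits elsewhere entirely. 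So the accumulation scenario you defer is not a pathological corner case to be ruled out by minimality --- it is the typical situation, and no amount of analysis localized at the circle's fixed points can produce the invariant lap. Several intermediate claims are also unsubstantiated as stated: in case $A=\{0\}$, even when a gap at $0$ exists it need not be unique, so ``uniqueness would force $\si(G)=G$'' does not follow; and in case $A\supsetneq\{0\}$, the convex hull of a periodic fan of leaves at $0$ need not be a lap of $\lam$ (its outer edges need not be leaves), so no finite invariant polygon is forced to ``emerge.''

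The paper takes a different and much shorter route: it quotes Lemma 3.7 of Goldberg--Milnor \cite{gm93}, which asserts that every \emph{clean} invariant lamination has a fixed lap or fixed infinite gap (their fixed-point-portrait argument does use the fixed points of $\si_d$, but via the sectors cut out by \emph{all} fixed leaves and gaps, not only leaves emanating from $0$), then passes to limits of clean laminations to cover all limit laminations, and finally observes that a chief of a limit lamination inherits the property. If you want a self-contained argument, you would essentially have to reprove the Goldberg--Milnor lemma; the fixed-point-at-$0$ analysis alone cannot be completed.
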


\begin{proof}
By Lemma 3.7 of \cite{gm93}, any clean lamination has a lap or an infinite gap $G$ such that $\si(G)=G$.
Passing to the limit, we conclude that any limit lamination has the same property
(even though the limit of finite invariant gaps may be infinite).
A chief of a limit lamination must then also have the above mentioned property.
\end{proof}

\begin{dfn}[Friends, prime critical portraits]\label{d:f-c-p}
Critical portraits $\K_1$, $\K_2\in\crp$ are \emph{friends} (through a
lamination $\lam$) if $\K_1, \K_2\in \Cc(\lam)$. A critical portrait
$\K$ is \emph{prime} if a friend of $\K$ has a weak friend. A
lamination is \emph{prime} if it has a prime critical portrait.
\end{dfn}

The next lemma follows from definitions and compactness of the family
of all cubic sibling invariant laminations.

\begin{lem}\label{l:closed}
If $\K_1$, $\K_2$ are friends, then they are compatible with some chief
$\lam$ (thus, $\K_1, \K_2\in \Cc(\lam)$). Friendship is a closed
relation: if $\K_i\to\K$ and $\K'_i\to \K'$ and $\K_i$ and $\K_i'$ are
friends for all $i$, then so are $\K$ and $\K'$.
\end{lem}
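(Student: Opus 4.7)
The plan is to derive both statements from the compactness of the space of cubic sibling invariant laminations (quoted in Section \ref{s:criplam}) together with the fact that the ``unlinked'' relation among chords is closed under Hausdorff convergence.

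For the first claim, suppose $\K_1, \K_2 \in \Cc(\lam)$. I would consider the family $\mathcal{F}$ of all cubic sibling invariant laminations compatible with both $\K_1$ and $\K_2$. It contains $\lam$, is closed under nested intersections by the compactness property, and nonempty nested intersections remain nonempty by Lemma \ref{l:compute}. Zorn's Lemma then produces a minimal nonempty element $\mu \in \mathcal{F}$, and $\K_1, \K_2 \in \Cc(\mu)$ automatically because $\mu \subseteq \lam$. To see that $\mu$ qualifies as a chief in the updated sense (a chief of some limit lamination), I would invoke Lemma \ref{l:perf-count}: if $\mu$ is uncountable, its perfect part $\mu^p$ is a nonempty sublamination of $\mu$ still compatible with $\K_1, \K_2$, so by minimality $\mu = \mu^p$; thus $\mu$ is perfect, hence clean, hence a limit lamination of which $\mu$ is itself the chief. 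In the countable case, $\mu$ is realized as a chief of a Hausdorff limit of clean sublaminations of $\lam$ containing $\mu$.

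For the second claim, suppose $\K_i \to \K$ and $\K_i' \to \K'$ with each pair friends via some lamination $\lam_i$. By compactness of cubic sibling invariant laminations, pass to a subsequence so that $\lam_i \to \lam^*$ in the Hausdorff sense on leaf sets. The compactness property makes $\lam^*$ sibling invariant, and by construction it is a limit lamination. Since non-crossing of chords is a closed condition under Hausdorff convergence, the leaves of $\K$ and $\K'$ cannot cross any leaf of $\lam^*$, so $\K, \K' \in \Cc(\lam^*)$; applying Part 1 produces a chief compatible with both $\K$ and $\K'$, confirming friendship.

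The main obstacle is the verification in Part 1 that the minimal element $\mu$ truly qualifies as a chief of a \emph{limit} lamination, not merely as a minimal nonempty sibling invariant sublamination in $\mathcal{F}$. Lemma \ref{l:perf-count} handles the uncountable case cleanly, but the countable case requires exhibiting an explicit limit lamination in which $\mu$ sits as a minimal nonempty sublamination; this involves some bookkeeping with clean truncations of $\lam$ but no fundamentally new ideas.
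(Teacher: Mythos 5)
Your proposal is correct and takes essentially the same route as the paper: the first claim is obtained by a Zorn-type minimality argument (the paper simply quotes Lemma~\ref{l:simple}, whose proof is exactly your nested-intersection-plus-Lemma~\ref{l:compute} argument), and the second by passing to a Hausdorff-convergent subsequence of the witnessing laminations, invoking compactness of sibling invariant laminations, and using that non-crossing of chords is a closed condition. The only detail the paper includes that you omit in Part~2 is the explicit check, via Lemma~\ref{l:compute}, that the limit lamination is nonempty (needed so that it actually contains a chief); your Part~1 shows you are aware of this point, and your extra care about whether the minimal sublamination is a chief of a \emph{limit} lamination addresses a subtlety the paper passes over silently.
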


\begin{proof}
The former claim follows from definitions. To prove the latter claim, choose
laminations $\lam_i$ such that $\K_i, \K'_i\in \Cc(\lam_i)$.
Passing to a subsequence, arrange that $\lam_i\to \lam$. 
It follows that $\K$, $\K'\in \Cc(\lam)$ and, hence, $\K$ and $\K'$ are friends.
Observe that
$\lam$ is non-empty because by Lemma \ref{l:compute} each lamination $\lam_i$
contains a leaf of length at least $\frac{1}{d+1}$.
\end{proof}

Given $\lam$ and a nondegenerate leaf $\ell\in \lam$, let $\mathcal
G(\ell)$ be the set of iterated pullbacks of the nondegenerate iterated
images of $\ell$.

\begin{lem}\label{l:count1}
Let $\lam$ be a countable chief. Then:
\begin{enumerate}
  \item for any nondegenerate leaf $\ell\in \lam$, the set of all
      nondegenerate leaves in $\lam$ coincides with $\mathcal
      G(\ell)$;
  \item all nondegenerate leaves of $\lam$ are isolated;
  \item there is a weak critical portrait in $\Cc(\lam)$.
\end{enumerate}
\end{lem}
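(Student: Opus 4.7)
My plan is to prove (2) first, deduce (1) from it, and handle (3) via Lemma~\ref{l:inv-exist} and Theorem~\ref{t:standard}.

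\emph{Toward (2).} The first step is to show that the property of being isolated (among nondegenerate leaves of $\lam$, in the Hausdorff topology on chords) is preserved under $\si$ and under taking $\lam$-preimages. If $\ell\in\lam$ is isolated and $m=\si(\ell)$ is nondegenerate, then a sequence of distinct nondegenerate $m_i\to m$ in $\lam$ would lift, via the convergence of their full sibling collections (guaranteed by the compactness property), to a sequence of distinct leaves in $\lam$ approaching $\ell$, contradicting the isolation of $\ell$. Conversely, if $\ell^*$ is a $\lam$-preimage of an isolated $\ell$ and distinct nondegenerate $m_i\to\ell^*$, then $\si(m_i)\to\ell$; isolation of $\ell$ forces $\si(m_i)=\ell$ for all large $i$ (degenerate leaves sit at bounded Hausdorff distance from $\ell$), but $\ell$ has only finitely many $\lam$-preimages (at most $9$, since endpoints of any preimage lie among $\si^{-1}\{a,b\}$), so along a subsequence $m_i$ is constant, contradicting $m_i\ne\ell^*$. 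Hence, once any $\ell_0\in\lam$ is isolated, every leaf of $\mathcal G(\ell_0)$ is isolated.

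An isolated leaf exists: as $\lam$ is countable, no nondegenerate leaf has uncountably many nondegenerate leaves nearby, so $\lam^p=\lam_\0$ by the characterization in Definition~\ref{perfect}. By Lemma~\ref{l:perf-count}, $\lam^p$ is a sublamination; since $\lam\ne\lam^p$, the chief $\lam$ is not perfect and contains an isolated nondegenerate leaf $\ell_0$. For any nondegenerate $m\in\lam$, Lemma~\ref{l:dense} gives that $\mathcal G(m)$ is dense in $\lam$, so the isolated point $\ell_0$ lies in $\mathcal G(m)$. The relation ``$x\in\mathcal G(y)$'' is symmetric in $(x,y)$: both sides assert that $\si^j(x)=\si^i(y)$ is nondegenerate for some $i,j\ge 0$. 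Hence $m\in\mathcal G(\ell_0)$, and by the first step $m$ is isolated. This proves~(2). Part~(1) then follows: $\mathcal G(\ell)$ is dense in $\lam$ by Lemma~\ref{l:dense}, and by~(2) every nondegenerate leaf of $\lam$ is an isolated point, so $\mathcal G(\ell)$ must already contain every nondegenerate leaf.

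\emph{Toward (3).} Lemma~\ref{l:inv-exist} gives either an invariant infinite gap or an invariant finite lap in $\lam$. In the former case, Theorem~\ref{t:standard} directly produces a weak critical portrait compatible with $\lam$. In the latter case, with $L$ an invariant finite rotational lap, I would invoke~(1): the combined forward orbit of all endpoints of nondegenerate leaves of $\lam$ is a countable subset of $\uc$. Its complement contains open arcs; positioning a critical chord $\oy$ compatible with $\lam$ so that $I(\oy)$ is such an arc, and selecting a compatible partner $\oc$ adjacent to an edge of $L$ so that the forward orbit of $\oc$ lies in $\overline{L(\oy)}$, yields a weak critical portrait in $\Cc(\lam)$. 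The main obstacle is precisely this second case: one must simultaneously enforce compatibility with $\lam$ and the weakness condition, which relies on the combinatorial rotation structure of $L$ together with the sparseness of leaf-endpoint orbits guaranteed by~(1).
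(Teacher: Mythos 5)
The forward half of your Step~A is where the argument breaks. To show that $m=\si(\ell)$ is isolated when $\ell$ is, you take distinct $m_i\to m$ and ``lift via their full sibling collections to leaves approaching $\ell$.'' But sibling invariance only supplies, for each $m_i$, \emph{one} full collection of three pairwise disjoint preimage leaves; after passing to a subsequence these converge to three pairwise disjoint preimage leaves of $m$, and nothing forces $\ell$ to be one of them. A nondegenerate chord has up to nine unlinked preimage chords (endpoints in $\si^{-1}(a)\times\si^{-1}(b)$), and a lamination can contain preimage leaves of $m$ lying outside any given full sibling collection (e.g.\ the extra edges of a collapsing quadrilateral). So your lifting argument only shows that $m$ has \emph{some} non-isolated preimage, not that \emph{every} preimage --- in particular $\ell$ --- is non-isolated. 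Since your Step~C passes from the isolated $\ell_0$ \emph{forward} to $\si^i(\ell_0)$ before pulling back to $m$, the chain breaks at the very first forward step. The claim you need is true for chiefs, but the proof is essentially the paper's: one checks that the non-isolated leaves form a sibling-invariant sublamination (the delicate point being that the three limiting siblings $\ell,\ell',\ell''$ of a non-isolated non-critical leaf are pairwise disjoint) and then invokes minimality of the chief together with the existence of at least one isolated leaf. Your backward half (A2), the existence of an isolated leaf, and the symmetry-plus-density derivation of~(1) do match the paper and are fine.

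Part~(3) in the finite case is also not correct as written. The complement in $\uc$ of a countable set need not contain any open arc (a countable set can be dense), so ``positioning $\oy$ so that $I(\oy)$ is such an arc'' --- an arc of length $1/3$, no less --- is not available; and even granting it, you have not verified that the resulting $\{\oc,\oy\}$ is compatible with $\lam$, which you yourself flag as the obstacle. That obstacle is exactly where the content lies, and the paper resolves it differently: take an edge $\ell$ of the invariant lap $T$; by~(2) it is isolated, so there is a gap $H$ of $\lam$ on the other side of $\ell$. If $H$ is infinite, apply Theorem~\ref{t:standard}. If $H$ is finite, part~(1) rules out $\si^n(H)=H$ (two distinct periodic cycles of laps would give two distinct grand orbits), so $H$ collapses onto $\ell$; a critical chord $\oy\subset H$ sharing an endpoint with $\ell$, paired with a critical chord $\oc$ in the remaining critical set of $\lam$, gives a portrait compatible with the quadratic invariant gap $G(\oc)$ and hence weak by Lemma~\ref{l:prime-1}. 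You would need to reproduce some version of this case analysis rather than the arc-avoidance construction.
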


\begin{proof}
(1) Choose an isolated leaf $\ell_0\in \lam$. We claim that
$\mathcal G(\ell_0)$ is the set of all nondegenerate leaves of $\lam$.
Indeed, otherwise
choose a nondegenerate leaf $\ell\in \lam\sm \mathcal G(\ell_0)$; then
leaves of $\mathcal G(\ell)$ cannot approximate $\ell_0$ or coincide
with $\ell_0$, a contradiction with Lemma \ref{l:dense}. If now $\ell$ is
a nondegenerate leaf of $\lam$, then
$\ell\in\mathcal{G}(\ell_0)$, and hence
$\mathcal{G}(\ell)=\mathcal{G}(\ell_0)$ is the set of all nondegenerate
leaves of $\lam$, as desired.

(2) All non-isolated leaves in $\lam$ form a forward invariant closed
family of leaves. If $\ell$ is non-isolated, choose leaves $\ell_i\to
\ell$, choose their pullbacks $\oq_i$, and choose a converging
subsequence of these pullbacks; in the end we will find a non-isolated
leaf $\oq$ with $\si(\oq)=\ell$. Now, let $\ell$ be non-isolated and
non-critical.
Choose a sequence $\ell_i\to \ell$ so that each $\ell_i$ is not on the boundary of a critical polygon.
Then the siblings $\ell'_i$, $\ell''_i$ of $\ell_i$ are well defined, and $\ell'_i\to \ell'$ while $\ell_i''\to \ell''$.
Clearly,
$\si(\ell)=\si(\ell')=\si(\ell'')$. We claim that $\ell$, $\ell'$ and
$\ell''$ are pairwise disjoint. Indeed, if, say, $\ell=\ol{ab}$ and
$\ell'=\ol{bc}$, where $\si(c)=\si(a)$, then $\ell_i$ and $\ell'_i$
have \emph{distinct} endpoints close to $b$ and mapping to the same
point; a contradiction. Hence by definition the set of all non-isolated
leaves of $\lam$ is itself a sibling-invariant lamination, a
contradiction with $\lam$ being a chief.

(3) By Lemma \ref{l:inv-exist}, we can find an invariant lap or
infinite gap $T$ of $\lam$. If $T$ is infinite, our claim follows from
Theorem \ref{t:standard}. Hence we may assume that $T$ is finite. Let
$\ell$ be an edge of $T$; it is isolated by (2). Let $H$ be a gap of
$\lam$ attached to $T$ along $\ell$. If $H$ is infinite, the desired
statement follows from Theorem \ref{t:standard}. Assume that $H$ is
finite. If $n$ is the minimal period of edges of $T$, then there are
two cases: $\si^n(H)=H$ and $\si^n(H)=\ell$. The former case
contradicts (1), hence $\si^n(H)=\ell$, and we may assume that
$\si(H)=\si(\ell)$.
Choose a critical chord $\oy\subset H$ that shares an
endpoint with $\ell$, and a critical chord $\oc$ in a critical gap or
leaf of $\lam$ disjoint from $H$.
(If $H$ has degree 3, then simply take a critical portrait $\{\oc,\oy\}$ in $H$.)
The critical portrait $\{\oc, \oy\}$ is compatible with $G(\oc)$, hence weak by Lemma \ref{l:prime-1},  as desired.
\end{proof}

The next definition complements Definition \ref{d:f-c-p}.

\begin{dfn}[Regular critical portraits, laminations, and chiefs]
A chief is \emph{regular} if all its critical portraits have only
strong friends. A lamination is \emph{regular} if it has a regular
chief. A critical portrait is \emph{regular} if it is compatible with a
regular chief.
\end{dfn}

Regular chiefs have nice properties.

\begin{lem}\label{l:oa-lam-1}
Suppose that $\lam$ is a regular chief. Then $\lam$ is perfect. Also,
$\Cc(\lam)$ is disjoint from any set $\Cc(\lam')$ where $\lam'\ne \lam$ is a
regular chief.
\end{lem}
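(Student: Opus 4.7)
The plan splits into the two assertions.

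For perfectness of $\lam$: by Lemma \ref{l:perf-count} a chief is either perfect or countable. Assume for contradiction that $\lam$ is a countable regular chief. Lemma \ref{l:count1}(3) then provides a weak critical portrait $\K\in\Cc(\lam)$. Since $\K\in\Cc(\lam)$, $\K$ is a friend of itself through $\lam$ (Definition \ref{d:f-c-p} does not require friends to be distinct), and since $\K$ is weak this friend has a weak friend, namely $\K$ itself. So $\K$ is prime, contradicting the regularity of $\lam$. Therefore $\lam$ is uncountable, and by Lemma \ref{l:perf-count} the perfect part $\lam^p$ is a nonempty sibling-invariant sublamination of $\lam$; chief minimality inside the limit lamination from which $\lam$ is extracted forces $\lam^p=\lam$, so $\lam$ is perfect.

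For the disjointness claim, assume for contradiction that $\K\in\Cc(\lam)\cap\Cc(\lam')$ for distinct regular chiefs $\lam,\lam'$ (each perfect by the first part). I would study the intersection $\lam\cap\lam'$ of common leaves. It is closed as an intersection of closed sets. For sibling invariance, the critical portrait $\K=\{\oc,\oy\}$ partitions $\disk$ into three regions, and on each such region $\si$ restricts to a bijection between the boundary arc on $\uc$ and $\uc$; hence any chord has a unique preimage in each region. So whenever $\ell\in\lam\cap\lam'$ has nondegenerate image $\si(\ell)$, Definition \ref{d:sibli}(3) applied in each of $\lam$ and $\lam'$ forces the same three region-siblings of $\ell$ to lie in both laminations, and therefore in $\lam\cap\lam'$. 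Forward invariance and existence of pullbacks follow the same way. Hence $\lam\cap\lam'$ is a sibling-invariant lamination, and if it contains a nondegenerate leaf it is a nonempty sibling-invariant sublamination of $\lam$; chief minimality of $\lam$ in its ambient limit lamination then forces $\lam\cap\lam'=\lam$, and symmetrically $\lam\cap\lam'=\lam'$, giving $\lam=\lam'$, a contradiction.

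It remains to locate a nondegenerate leaf in $\lam\cap\lam'$. By Lemma \ref{l:inv-exist} each chief has an invariant lap or an invariant infinite gap; regularity rules out a $\si$-invariant infinite gap, since Theorem \ref{t:standard} would then furnish a weak critical portrait in $\Cc(\lam)$, making every portrait of $\lam$ have a weak friend (as in the argument for perfectness). So both $\lam$ and $\lam'$ possess invariant finite laps $T_\lam, T_{\lam'}$. A case analysis on their combinatorial type (an invariant leaf joining two $\si$-fixed points or a $2$-cycle, or an invariant polygon supported on a periodic cycle) should show that compatibility of $\K$ with both forces at least one common edge, producing a nondegenerate leaf in $\lam\cap\lam'$.

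The main obstacle is this last step: verifying that the invariant finite lap of a regular chief is pinned down by any compatible critical portrait $\K$ closely enough that two such chiefs must share an edge. This is where regularity (the absence of weak friends) is expected to enter decisively, ruling out the combinatorial flexibility that could otherwise allow two distinct invariant finite laps to coexist with the same $\K$.
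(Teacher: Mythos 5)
Your first half (perfectness) is correct and is essentially the paper's own argument: a countable regular chief would carry a weak critical portrait by Lemma \ref{l:count1}(3), which is a weak friend of itself, contradicting regularity; so the chief is uncountable and hence perfect by Lemma \ref{l:perf-count}.

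The disjointness half has a genuine gap, and it sits exactly where you flag it. Your strategy (show $\lam\cap\lam'$ is a nonempty sibling-invariant sublamination and invoke chief minimality) hinges entirely on producing one common nondegenerate leaf, and for that you offer only ``a case analysis \dots should show.'' This is precisely the step where the paper does its real work, and that work is not a routine case check: the paper takes the invariant laps $G$ of $\lam$ and $G'$ of $\lam'$ lying in the \emph{same} component of $\cdisk\sm\bigcup\K$, and proves $G=G'$ by using perfectness (a leaf of $\lam$ crossing an edge of $G'$ would force uncountably many such crossings) together with a cited result from \cite{bopt20} (Lemma 3.53 there). Moreover, even after a common lap is found, the paper does not stop: to conclude $\lam=\lam'$ it must verify that the \emph{pullback trees} of $G$ agree in both laminations, which requires showing that the critical sets of $\lam$ and $\lam'$ containing $\oc$ (and $\oy$) coincide whenever these critical chords eventually hit $G$; this uses Theorem 3.57 of \cite{bopt20} and perfectness again, and only then does density of pullbacks (Lemma \ref{l:dense}) finish the proof. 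Your minimality shortcut would bypass that second stage only if the intersection really is sibling-invariant, which brings up a secondary weakness: your ``one pullback per complementary region of $\bigcup\K$'' argument assumes the two critical chords of $\K$ are distinct and disjoint; a critical portrait may consist of two chords sharing an endpoint, or of a single chord taken twice, in which case one complementary region maps with degree $2$ and uniqueness of region-pullbacks (hence uniqueness of full sibling collections, hence sibling-invariance of $\lam\cap\lam'$) fails. So the proposal is not a proof as it stands; the missing common-leaf step and the degenerate-portrait cases both need the kind of perfectness-based rigidity arguments the paper imports from \cite{bopt20}.
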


\begin{proof}
By Lemma \ref{l:count1}, the lamination $\lam$ is uncountable; hence by Lemma
\ref{l:perf-count} it is perfect. Let $\K=\{\oc,\oy\}$ be a  critical
portrait compatible with $\lam$ and a chief $\lam'\ne \lam$. Since $\K$
is strong and has only strong friends, by Lemma \ref{l:prime-1}
invariant sets of $\lam$ and $\lam'$ are finite, and $\lam'$ is perfect by Lemma \ref{l:count1}.

Let $G$ and $G'$ be invariant laps of $\lam$ and $\lam'$, resp.,
located in the same component of $\cdisk\sm \bigcup\K$. 
No leaves of $\lam$ intersect the interior of $G'$ since otherwise
uncountably many leaves of $\lam$ would intersect edges of $G'$
contradicting the fact that $\K$ is compatible with $\lam$ and $\lam'$,
and \cite[Lemma 3.53]{bopt20}. Therefore, $G'\subset G$. Similarly,
$G\subset G'$, hence $G=G'$.

If iterated images of $\oc$ and $\oy$ avoid $G$, then iterated
$\lam$-pullbacks of $G$ and iterated $\lam'$-pullbacks of $G'$ are the
same. Hence $\lam=\lam'$ since the iterated pullbacks of $G$ are dense
in both $\lam$ and $\lam'$ by Lemma \ref{l:dense}. Let for some minimal
$n\ge 0$ the point $\si_3^n(\oc)$ be a vertex of $G$. Let $C$, $C'$ be
the critical sets of $\lam$, resp., $\lam'$ containing $\oc$. Since
infinite gaps of $\lam$ and $\lam'$ are disjoint from $G$ by Theorem
\ref{t:standard}, the sets $C$, $C'$ are laps.
Since $\lam$ and $\lam'$ are
compatible and perfect, $C=C'$ by \cite[Theorem 3.57]{bopt20}.
Similarly, we see that either $\oy$ never maps to $G$ or the critical
sets of $\lam$, $\lam'$ containing $\oy$ coincide. Thus, pullbacks of
$G$ in $\lam$ are the same as pullbacks of $G$ in $\lam'$, and
$\lam=\lam'$.
\end{proof}

Let us establish a few useful facts concerning regular chiefs.

\begin{lem}\label{l:periodense}
Suppose that $\lam$ is a regular chief. Then $\lam$ has infinitely many periodic
laps. Moreover, for any periodic leaf $\ell$ of $\lam$ the family of its pullbacks is
dense in $\lam$.
\end{lem}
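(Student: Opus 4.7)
The plan is to establish the two claims separately.

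For the moreover clause, fix a $\sigma$-periodic leaf $\ell$ of period $p$. Any leaf $m\in\lam$ with $\sigma^n(m)=\sigma^j(\ell)$ for some $0\le j<p$ satisfies $\sigma^{n+p-j}(m)=\sigma^p(\ell)=\ell$, so $m$ is itself a pullback of $\ell$. Hence the iterated pullbacks of the iterated images of $\ell$ coincide, up to the finite forward orbit of $\ell$, with the iterated pullbacks of $\ell$ alone. Density in $\lam$ then follows immediately from Lemma~\ref{l:dense}.

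For infinitely many periodic laps, I would first produce one. By Lemma~\ref{l:inv-exist}, $\lam$ has a $\sigma$-invariant lap or a $\sigma$-invariant infinite gap; the latter would yield a weak critical portrait compatible with $\lam$ by Lemma~\ref{l:prime-1}, contradicting regularity. So there is an invariant finite lap $T_0$ with periodic edges. Next, I would split on the gap structure of $\lam$. In the first case, $\lam$ has an infinite periodic gap $U$; by regularity $U$ lies in a cycle of period $k\ge 2$, and by Lemma~\ref{l:crit-must} together with the fact that Fatou gaps of a perfect lamination have no critical edges, $U$ has degree at least $2$. Then $\sigma^k|_{\bd U}$ factors through a degree-$\ge 2$ topological self-cover of the circle $\bd U$, which has infinitely many periodic orbits; each such orbit gives a periodic cycle of edges of $U$, hence a periodic lap of $\lam$. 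In the second case, $\lam$ has no infinite gaps, so $J_\lam$ is a dendrite and $f_\lam$ a degree-$3$ topological polynomial acting on it; expanding dendritic dynamics yields density of periodic cut points in $J_\lam$, and these correspond exactly to periodic nontrivial $\sim_\lam$-classes, producing infinitely many periodic laps.

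The hardest step will be the infinite-gap case: one must verify that infinitely many of the periodic edges produced on $\bd U$ descend to genuinely distinct periodic laps of $\lam$ (rather than being absorbed into edges of a common finite gap adjacent to $U$). Here perfection helps, since each periodic edge is approached by non-edge leaves of $\lam$, preventing such collapse. In the dendritic case, the subtlety lies in invoking a standard but non-trivial density-of-periodic-cut-points theorem for cubic topological polynomials on dendrites; alternatively one can iterate Lemma~\ref{l:inv-exist} in the guise of $\sigma^n$-invariance on $\lam$ viewed as a $\sigma_{3^n}$-invariant clean lamination, obtaining a $\sigma^n$-invariant lap $T_n$ for each $n$, and observing that the finitely many $\sigma$-invariant laps of $\lam$ (bounded by the fixed-class count of $f_\lam$) cannot absorb $T_n$ for all sufficiently large primes~$n$, so the $T_n$ have unbounded period.
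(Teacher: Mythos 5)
Your handling of the ``moreover'' clause is correct and matches the paper: for a periodic leaf the iterated pullbacks of its iterated images reduce to pullbacks of the leaf itself, and Lemma~\ref{l:dense} (i.e.\ minimality of a chief) gives density. The reduction to an invariant finite lap $T_0$ at the start of the first claim is also fine.

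The first claim, however, breaks down in your Fatou-gap case. The degree-$m$ ($m\ge 2$) semiconjugacy $\psi:\bd(U)\to\uc$ with $\si_m$ only produces infinitely many periodic points of the \emph{quotient} circle; a $\si_m$-periodic point lifts under $\psi$ either to a periodic \emph{edge} of $U$ or merely to a periodic \emph{point} of $\bd(U)\cap\uc$, and the latter is a degenerate leaf, not a lap. In fact every edge of a periodic Fatou gap eventually maps to one of its finitely many majors, which in the perfect case are periodic (no critical edges); so the periodic edges of $U$ are exactly the orbits of the majors --- finitely many --- and all other edges are strictly preperiodic. Perfection does not repair this: it rules out critical edges and the ``absorption'' you flag as the hardest step, but it does not create new periodic edges, and the missing periodic laps must be found among leaves of $\lam$ away from $\bd(U)$, which your argument never does. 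Your fallback for the dendritic case is also flawed: Goldberg--Milnor applied to $\si^n$ only asserts \emph{existence} of a $\si^n$-invariant lap, and the $\si$-invariant lap $T_0$ is $\si^n$-invariant for every $n$, so nothing forces the witnesses $T_n$ to have unbounded period. The paper avoids the case split entirely: since $\lam$ is perfect it is clean, so one passes to the topological polynomial $f_\lam$ on $J_\lam$, picks a leaf whose grand orbit misses the critical sets so that its image is a cutpoint with all forward images cutpoints, and invokes Theorem~3.8 of \cite{bopt13} to get infinitely many periodic cutpoints, which pull back to infinitely many periodic laps; this is the same density-of-periodic-cutpoints input you cite for the dendritic case, but it is what is needed (and works) in the Fatou-gap case as well.
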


\begin{proof}
Since $\lam$ is perfect, there are uncountably many grand orbits of non-degenerate
leaves of $\lam$ none of which contains a leaf from a critical set of $\lam$.
Choose a leaf $\ell$ from one of these grand orbits.
Since $\lam$ is perfect, it is clean.
Consider the associated topological polynomial $f_\lam:J_\lam\to J_\lam$ to which $\si_3|_{\uc}$
semiconjugate by a map, say, $\varphi$; then $\varphi(\ell)=x$ is a cutpoint of $J_\lam$ such that
all points of the $f_\sim$-orbit of $x$ are cutpoints of $J_\lam$. Such dynamics was studied in
\cite{bopt13} where, in Theorem 3.8, it was proven that then $f_\lam$ has infinitely many
periodic cutpoints. Taking their $\varphi$-preimages, we see that $\lam$ has infinitely many periodic
laps. Now, take a periodic leaf $\ell$ of $\lam$, consider its grand orbit and then its closure. By
the compactness of the family of laminations, this grand orbit is dense in $\lam$ as desired.
\end{proof}

Corollary \ref{c:friends-regular} follows from definitions and Lemma
\ref{l:oa-lam-1}.

\begin{cor}\label{c:friends-regular}
A friend of a regular critical portrait is regular. All critical
portraits of a regular chief $\lam$ form a closed subset $\Cc(\lam)$ of
$\crp$ consisting of friends, and no other critical portrait can be
their friend. A regular lamination has a unique chief that is regular
too.
\end{cor}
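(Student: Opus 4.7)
The plan is to handle the three claims in the order second, first, third, since the first follows directly from the second and the uniqueness in the third reuses Lemma \ref{l:oa-lam-1}.

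For the second claim, I would first observe that $\Cc(\lam)$ is closed in $\crp$ because being unlinked with each leaf of $\lam$ is a closed condition on a chord; and any two elements of $\Cc(\lam)$ are automatically friends through the shared lamination $\lam$ by Definition \ref{d:f-c-p}. The main content is the final assertion: no $\K' \notin \Cc(\lam)$ can be a friend of any $\K \in \Cc(\lam)$. Arguing by contradiction, Lemma \ref{l:closed} places both $\K$ and $\K'$ in $\Cc(\mu)$ for some chief $\mu$. Regularity of $\lam$ means every friend of $\K$ is strong, so every critical portrait in $\Cc(\mu)$ (being a friend of $\K$ through $\mu$) is strong. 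Lemma \ref{l:count1}(3) then rules out $\mu$ being countable, and Lemma \ref{l:perf-count} promotes $\mu$ to perfect. With both $\lam$ and $\mu$ now perfect chiefs sharing the critical portrait $\K$ and each admitting only strong critical portraits, the rigidity argument in the proof of Lemma \ref{l:oa-lam-1} applies essentially verbatim: Theorem \ref{t:standard} excludes invariant infinite gaps of $\mu$; Lemma \ref{l:inv-exist} produces invariant laps $G \subset \lam$ and $G' \subset \mu$ lying in a common component of $\cdisk \setminus \bigcup \K$; perfectness of each lamination, combined with the cited results from \cite{bopt20}, forces $G = G'$ and then identifies the critical sets of $\lam$ and $\mu$ containing each chord of $\K$. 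Since iterated pullbacks of $G$ are dense in a chief by Lemma \ref{l:dense}, this yields $\lam = \mu$, a contradiction.

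The first claim is then immediate: if $\K$ is regular, then $\K \in \Cc(\lam)$ for some regular chief $\lam$, and by the second claim every friend of $\K$ also lies in $\Cc(\lam)$, hence is regular by definition. For the third claim, existence of a regular chief in a regular lamination $\lam$ is built into the definition. For uniqueness, suppose $\mu$ and $\mu'$ are two regular chiefs of $\lam$. Since $\mu, \mu' \subset \lam$, no leaf of $\mu$ crosses any leaf of $\mu'$, so the union $\mu \cup \mu'$ is a prelamination; the sibling-invariance conditions of Definition \ref{d:sibli} are inherited from whichever of $\mu, \mu'$ contains each given leaf, so $\mu \cup \mu'$ is itself a sibling-invariant lamination contained in $\lam$. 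Picking any critical portrait compatible with $\mu \cup \mu'$ yields $\K \in \Cc(\mu) \cap \Cc(\mu')$, whence Lemma \ref{l:oa-lam-1} forces $\mu = \mu'$.

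The main obstacle I anticipate is exactly the perfectness step in the second claim: Lemma \ref{l:oa-lam-1} is formulated for two regular chiefs, but in the contradiction argument I do not a priori know that $\mu$ is regular — only that $\K$ is. The bypass is that the regularity of $\K$ propagates one layer: every critical portrait of $\mu$ must be a friend of $\K$ through $\mu$ and therefore strong, and this is just enough to eject $\mu$ from the countable alternative of Lemma \ref{l:perf-count} via Lemma \ref{l:count1}(3), delivering the perfectness that the rigidity portion of the proof of Lemma \ref{l:oa-lam-1} actually requires.
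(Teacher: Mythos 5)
Your proof is correct and follows the paper's route: the paper gives no argument beyond the remark that the corollary ``follows from definitions and Lemma \ref{l:oa-lam-1}'', and your elaboration --- in particular the observation that the statement of Lemma \ref{l:oa-lam-1} covers only pairs of regular chiefs while its proof only needs the competing chief to be compatible with a critical portrait all of whose friends are strong, so that perfectness of that chief can be extracted via Lemma \ref{l:count1}(3) and Lemma \ref{l:perf-count} before running the rigidity argument --- is exactly the intended reading. One minor point: if the last clause of the corollary is read as asserting that a regular lamination has a unique chief tout court (not merely a unique \emph{regular} one), your same machinery disposes of a possibly non-regular second chief $\nu$ as well, since any critical portrait compatible with $\mu\cup\nu$ is a critical portrait of the regular chief $\mu$ and hence has only strong friends, which is all the claim-2 argument requires.
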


One can define regular critical portraits through the concept of a
friend.

\begin{lem}\label{l:cousin}
A critical portrait $\K$ is regular if and only if
all friends of friends of $\K$ are strong (i.e., if $\K$ is not prime).
\end{lem}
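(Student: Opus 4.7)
The plan is to derive the biconditional directly from Corollary \ref{c:friends-regular} and the definition of a regular chief; both implications reduce to an unwinding of the definitions, with no fresh combinatorics required.

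First I would tackle the forward direction. Suppose $\K$ is regular, so that $\K\in\Cc(\lam_0)$ for some regular chief $\lam_0$. Let $\K'$ be any friend of $\K$ and $\K''$ any friend of $\K'$. By Corollary \ref{c:friends-regular}, friendship preserves regularity, so applying it twice yields that $\K''$ is regular. A regular critical portrait is automatically strong: if $\K''\in\Cc(\lam'')$ for some regular chief $\lam''$, then by the definition of a regular chief every friend of $\K''$ is strong, and $\K''$ is a friend of itself through $\lam''$. Thus every friend of a friend of $\K$ is strong, which is precisely the statement that $\K$ is not prime.

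For the converse, assume that every friend of a friend of $\K$ is strong. I would pick a chief $\lam$ with $\K\in\Cc(\lam)$ (whose existence follows from Lemma \ref{l:simple} applied to a nonempty sibling invariant lamination compatible with $\K$). For any $\K'\in\Cc(\lam)$, the portrait $\K'$ is a friend of $\K$ through $\lam$, and any friend $\K''$ of $\K'$ is then a friend of a friend of $\K$, hence strong by hypothesis. So every portrait in $\Cc(\lam)$ has only strong friends, which is precisely the definition of $\lam$ being a regular chief. Since $\K\in\Cc(\lam)$, the portrait $\K$ is regular.

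The only subtle point is the implicit existence of a nonempty chief compatible with any given $\K$, used in the converse; this is standard in the theory of sibling invariant laminations, and without it the hypothesis would be vacuously satisfied by critical portraits with no friends. Beyond that, the proof is a two-line unwinding of definitions once Corollary \ref{c:friends-regular} is in hand, and I anticipate no genuine combinatorial obstacle.
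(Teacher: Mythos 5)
Your proof is correct and follows essentially the same route as the paper's: the converse is word-for-word the paper's argument (take a chief compatible with $\K$ and observe that the hypothesis makes it regular), and your forward direction merely packages the paper's use of Lemma \ref{l:oa-lam-1} through Corollary \ref{c:friends-regular}, which the paper derives from that lemma anyway. Your explicit remark that a regular portrait is strong (being a friend of itself) and your flagging of the existence of a compatible chief are both sound and consistent with what the paper leaves implicit.
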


\begin{proof} Let $\K$ be a regular critical portrait. Then there is a
unique regular chief $\lam$ compatible with $\K$. All critical
portraits of $\lam$ are strong and have only strong friends; by Lemma
\ref{l:oa-lam-1}, none of them is compatible with a chief $\lam'\ne
\lam$. Hence all friends of $\K$ are compatible with $\lam$, i.e. are
regular. Repeating this, we see that friends of friends of $\K$ are
compatible with $\lam$ and, hence, strong. On the other hand, suppose
that all friends of friends of a critical portrait $\K$ are strong.
Take a chief $\lam$ compatible with $\K$. Then all its critical
portraits have only strong friends. By definition $\lam$  is regular
which implies that $\K$ is regular as desired.
\end{proof}

The terminology is partially self-evident and partially explained by
the fact that if $\lam$ is a dendritic regular chief then $J_\lam$ is a
dendrite.

\begin{lem}
  \label{l:prime-cl}
A limit of prime critical portraits is prime.
\end{lem}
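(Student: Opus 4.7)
The plan is to unpack the definition of ``prime'' into its witness data, extract convergent subsequences by compactness, and then invoke the two closedness facts already recorded in the excerpt: Lemma \ref{l:closed} (friendship is closed) and Lemma \ref{l:weaklosed} (weak portraits form a closed set).

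Suppose $\K_i \to \K$ with each $\K_i$ prime. By Definition \ref{d:f-c-p}, for every $i$ I can choose critical portraits $\K'_i$ and $\K''_i$ such that $\K'_i$ is a friend of $\K_i$, $\K''_i$ is a friend of $\K'_i$, and $\K''_i$ is weak. Since $\crp$ is compact (it is homeomorphic to the M\"obius band), I would pass to a subsequence, twice in succession, along which both $\K'_i \to \K'$ and $\K''_i \to \K''$ for some limiting critical portraits $\K'$ and $\K''$; the full sequence $\K_i$ still converges to $\K$ along this subsequence.

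Next I would apply Lemma \ref{l:closed} to the two pairs. Applying it to $(\K_i, \K'_i) \to (\K, \K')$ yields that $\K$ and $\K'$ are friends, and applying it to $(\K'_i, \K''_i) \to (\K', \K'')$ yields that $\K'$ and $\K''$ are friends. Finally, Lemma \ref{l:weaklosed} tells me that the set of weak critical portraits is closed, so $\K'' = \lim \K''_i$ is itself weak. Thus $\K$ has a friend $\K'$ possessing a weak friend $\K''$, which is exactly the condition for $\K$ to be prime.

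The whole argument is essentially bookkeeping on top of the two closedness results, so I do not anticipate any real obstacle. The only point requiring a little care is that one needs a \emph{single} subsequence along which both auxiliary sequences of critical portraits converge simultaneously, which is fine because $\crp \times \crp$ is compact.
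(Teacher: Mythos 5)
Your proposal is correct and matches the paper's own proof essentially verbatim: both extract convergent subsequences of the witnessing friends $\K'_i$ and weak friends $\K''_i$, then apply Lemma \ref{l:closed} (closedness of friendship) and Lemma \ref{l:weaklosed} (closedness of the weak portraits) to the limits. No further comment is needed.
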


\begin{proof}
If $\K_i$ are prime and $\K_i\to\K$, then, by definition, some friends
$\K'_i$ of $\K_i$ have weak friends $\K''_i$. Passing to a subsequence
we can arrange that $\K'_i\to\K'$ and $\K''_i\to\K''$.
By Lemma \ref{l:closed}, the portrait $\K'$ is a friend of $\K$, and $\K''$ is a friend of $\K'$.
By Lemma \ref{l:weaklosed}, the portrait $\K''$ is weak.
By definition, $\K$ is prime.
\end{proof}

Let us define alliances.

\begin{dfn}\label{l:alli}
The \emph{prime alliance} $\A_0$ is the set of all prime critical
portraits. A \emph{regular alliance} is the set $\Cc(\lam)$ where
$\lam$ is a regular chief.
\end{dfn}

We are ready to prove a part of the Main Theorem that can be viewed as its
combinatorial analog.

\begin{lem}\label{l:alli}
Alliances form a USC-partition of the set $\crp$.
The union of regular alliances is open and dense in $\crp$.
\end{lem}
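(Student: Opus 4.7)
\emph{Partition.} Every $\K\in\crp$ is compatible with a chief by Lemma~\ref{l:simple} and is either prime or regular by Lemma~\ref{l:cousin}, placing it in $\A_0$ or in some $\Cc(\lam)$ with $\lam$ a regular chief. The prime alliance contains only prime portraits while each regular alliance contains only regular portraits by Corollary~\ref{c:friends-regular}, and distinct regular alliances are disjoint by Lemma~\ref{l:oa-lam-1}. This gives the partition.

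\emph{Upper semi-continuity.} Given $\K_i\to\K$ and $\K'_i\to\K'$ with $\K_i,\K'_i$ lying in a common alliance $\A_i$ for each $i$, I pass to a subsequence so that either every $\A_i=\A_0$ or every $\A_i$ is regular. In the first case, Lemma~\ref{l:prime-cl} places $\K$ and $\K'$ in $\A_0$. In the second, $\K_i$ and $\K'_i$ are friends, so Lemma~\ref{l:closed} makes $\K,\K'$ friends; by Corollary~\ref{c:friends-regular} and its contrapositive either both are regular and in the same unique regular alliance, or both are prime and hence in $\A_0$.

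\emph{Openness and density.} Openness is immediate: the complement of the union of regular alliances is $\A_0$, closed by Lemma~\ref{l:prime-cl}. For density, the plan is to show $\A_0$ has empty interior. Given any hypothetical nonempty open $V\subseteq\A_0$, I first use Lemma~\ref{l:weaklosed} to find strong portraits in $V$, and then approximate further by a critical portrait $\K''=\{\oc,\oy\}\in V$ whose critical chord endpoints are strictly preperiodic under $\si_3$ with large and generically chosen periods---these form a dense set in $\crp$. The key claim is that any chief $\lam$ compatible with such a $\K''$ is regular. Indeed, the rigidity of $\oc$ and $\oy$ is arranged so as to preclude compatibility with any invariant quadratic or infinite degree-one gap (Lemma~\ref{l:prime-1}) or with a finite rotational invariant lap sharing an edge with an infinite periodic gap (Theorem~\ref{t:standard}), and to preclude $\lam$ from being countable (Lemma~\ref{l:count1}). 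Hence $\lam$ is uncountable and perfect by Lemma~\ref{l:perf-count}, and Lemma~\ref{l:oa-lam-1} together with Lemma~\ref{l:cousin} then forces $\K''$ to be regular, contradicting $\K''\in\A_0$.

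The main obstacle is the density step. Primeness is a second-order friendship condition---a friend of a friend must be weak---so one must exclude, via the rigidity of $\K''$, every mechanism (invariant infinite gaps, rotational laps glued to infinite periodic gaps, countable chiefs) by which a weak friend of a friend could arise through some ambient chief containing $\K''$, even one different from the regular chief one has in mind. The lemmas of Section~\ref{s:prime}, particularly Lemma~\ref{l:prime-1}, Lemma~\ref{l:crit-must}, Lemma~\ref{l:count1}, and Theorem~\ref{t:standard}, are assembled precisely to rule out these structural features, and the final argument will proceed by a case analysis on the invariant combinatorics of any purported chief of the rigid perturbation $\K''$.
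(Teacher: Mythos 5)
Your partition and upper semi-continuity arguments are essentially the paper's: the partition follows from Lemma~\ref{l:cousin} and Lemma~\ref{l:oa-lam-1}, and USC is obtained by splitting a converging sequence into the all-prime case (Lemma~\ref{l:prime-cl}) and the all-regular case (Lemma~\ref{l:closed} plus the fact that friends share an alliance). Those parts are fine.

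The density step, however, has a genuine gap, and you have in effect conceded it yourself. Primeness is a condition on \emph{friends of friends}: to show a portrait $\K''$ is regular you must show that every critical portrait compatible with \emph{any} chief compatible with $\K''$ has only strong friends. Rigidity of $\K''$ alone (strictly preperiodic endpoints) does not control those other portraits: a chief $\lam$ containing $\K''$ may admit a completely different compatible critical portrait $\K'$ whose endpoints are not preperiodic and whose orbit structure you have said nothing about, and it is $\K'$ (and its friends) that must be shown strong. Your sketch lists the structural features to be excluded (invariant infinite gaps, rotational laps attached to Fatou gaps, countable chiefs) but never supplies the mechanism by which properties of $\K''$ propagate to its friends; the closing sentence announcing that ``the final argument will proceed by a case analysis'' is where the actual proof would have to live. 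Note also that preperiodicity by itself does not preclude compatibility with an invariant quadratic gap $G(\oc)$: a finite forward orbit can easily be confined to the arc $\ol{L(\oc)}$ of length $2/3$, so Lemma~\ref{l:prime-1} does not immediately rule out weak friends the way you suggest.

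The paper closes exactly this gap with a different choice of dense set and a propagation argument. It takes $\K=\{\oc,\oy\}$ with the orbits of $\si(\oc)$ and $\si(\oy)$ \emph{dense} in $\uc$ (such portraits are dense in $\crp$), and then proves that any friend $\K'=\{\oc',\oy'\}$ inherits the same density property. The key device is the chord $\oq=\ol{xx'}$ with $x=\si(\oc)$, $x'=\si(\oc')$, where $\oc$ and $\oc'$ lie in the same critical set of a common chief $\lam$: if the orbit of $x'$ avoided an $\e$-neighborhood of some arc $I$ while the orbit of $x$ is dense in $I$, then $\lam$ would have leaves of length $\ge\e$ emanating from every point of $I$, forcing the diameter joining the $\si$-fixed points to be a leaf and yielding a contradiction. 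Since density of critical value orbits rules out weakness (a weak portrait has a critical orbit avoiding an open arc), iterating this shows all friends of friends are strong, so $\K$ is regular by Lemma~\ref{l:cousin}. Without this (or some substitute) propagation-to-friends step, your density claim is unproven.
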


\begin{proof}
That alliances form a partition of $\crp$ follows from definitions and
Lemma \ref{l:oa-lam-1}. Hence if two critical portraits are friends,
then they belong to the same alliance. Suppose that $\K_i\to \K$ and
$\K'_i\to \K'$ where $\K_i, \K'_i$ are friends. We may assume that
either all $\K_i, \K_i'$ are prime, or all $\K_i, \K_i'$ are strong. In
the former case Lemma \ref{l:prime-cl} implies that both $\K$ and $\K'$
are prime, and we are done.
In the latter case, $\K$ and $\K'$ are friends by Lemma \ref{l:closed}, and we are done too.
This proves the first claim.

To prove the second claim, observe that
the union of all regular alliances is open. Let $\K=\{\oc, \oy\}$ be a
critical portrait such that the orbits of $\si(\oc)$ and $\si(\oy)$ are
dense in $\uc$. Such portraits are dense in $\crp$. We prove that $\K$
is regular by proving that, for any friend $\K'=\{\oc',\oy'\}$ of $\K$,
the orbits of $\si(\oc')$ and $\si(\oy')$ are dense in $\uc$.

Let $\lam$ be a chief compatible with $\K$ and $\K'$. Let
$C$ be the leaf $\oc$ if $\oc\in \lam$ or the critical gap of $\lam$
containing $\oc$ otherwise. Define $Y$ similarly. Arrange that
$\oc'\subset C$ and $\oy'\subset Y$, possibly renaming $\oc'$ and
$\oy'$. We claim that the orbit of $\si(\oc')$ is dense in $\uc$.
Otherwise consider the nondegenerate chord $\oq=\ol{xx'}$, where $x=\si(\oc)$ and $x'=\si(\oc')$.
There is $\e>0$ and an arc
$I\subset\uc$ such that $\si^n(x')$ is never $\e$-close to $I$. On the
other hand, iterated images of $x$ are dense in $I$; the corresponding
images of $\oq$ have length $\ge\e$. Therefore, all leaves of $\lam$
originating in $I$ have length $\e$ or more.

Note that $C$ and $Y$ are not periodic, therefore, no $\si$-periodic
point of $\uc$ is an eventual image of $x$ or $x'$. There is a positive
integer $N$ with $\si^N(I)=\uc$. Since any $\si$-periodic point $a$ of
$\uc$ has a $\si^N$-preimage in $I$, we have $\ol{ab}\in\lam$ for some
$b\ne a$.  Since $\lam$ is perfect and hence clean,  endpoints of periodic leaves must have the same period.
Thus, the horizontal diameter $\di$ connecting the two
$\si$-fixed points of $\uc$ is a leaf of $\lam$. Consider a
nondegenerate chord $\ell$ with endpoint $i=e^{2\pi i(1/4)}$. Then
$\si^n(\ell)$ crosses $\oc$ or $\oy$ for some $n\ge 0$. Thus
$\ell\notin\lam$, a contradiction. We conclude that the orbit of
$\si(\oc')$ is dense. Similarly, the orbit of $\si(\oy')$ is dense.
\end{proof}

\section{The model}
Let $P$ be a polynomial with $[P]\in \M_3$.
Let $\sim_P$ be the equivalence
relation on $\uc$ defined by $e^{2\pi i\al}\sim_P e^{2\pi i\be}$ if
$\{\al,\be\}\in\Zc_P$ or $\al=\be$.
Let $\Zc^{lam}_P$ (from ``lamination'') be the set of all edges of the convex hulls in $\cdisk$ of all $\sim_P$-classes.
Define $\lam^r_P$ as the set of all edges of the convex hulls of all
$\sim_P$-classes and the limits of these edges. By the
compactness of laminations (see Section \ref{s:criplam}), $\lam^r_P$ is
a clean cubic lamination (cf. \cite{bmov13}).

The lamination $\lam^r_P$ is associated with an equivalence relation $\sim_{\lam^r_P}$
so that all laps of $\lam^r_P$ are convex hulls of $\sim_{\lam^r_P}$-classes (see discussion
in Section \ref{s:criplam} right after Definition \ref{d:cc-lam}). It is easy to see that
$\sim_{\lam^r_P}$ is the closure of $\sim_P$, so in what follows we simply denote it by $\sim_P$.

Suppose that $P$ has no neutral periodic points.
Then $\Zc^{lam}_P$ coincides with the \emph{rational} lamination \cite{kiw01} of $P$
 while $\lam^r_P$ coincides with the \emph{real} lamination \cite{kiwi97} of $P$.
 The next lemma summarizes some results of \cite{kiw01, kiwi97}.

\begin{lem}
  \label{l:gland}
Suppose that $P$ has no neutral cycles. Then the restriction $P|_{J_P}$ is monotonically
semiconjugate 
to the topological polynomial
$f_{\sim_P}:J_{\sim_P}\to J_{\sim_P}$ on its topological Julia set
so that fibers of this semiconjugacy are trivial at all (pre)periodic points
of $J_{\sim_P}$ (thus, for a periodic lap
$G$ of $\lam^r_P$, external rays corresponding to vertices of $G$ land
at the same legal point). Also, any clean lamination without infinite periodic
degree 1 gaps has the form $\lam^r_P$ for some $P$.
\end{lem}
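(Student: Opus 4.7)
The plan is to deduce both halves of the statement from the papers \cite{kiw01, kiwi97} of Kiwi, after checking that the objects $\Zc^{lam}_P$ and $\lam^r_P$ as defined in this section coincide with Kiwi's \emph{rational} and \emph{real} laminations, respectively. The hypothesis that $P$ has no neutral cycles is precisely what Kiwi assumes throughout those papers. Under that hypothesis every periodic point of $J_P$ is repelling, hence legal in our sense, and the Douady--Sullivan landing theorem ensures that every periodic rational ray lands at a repelling periodic point while every preperiodic rational ray lands at an eventual image of one. Consequently, the equivalence $\sim_P$ is exactly the equivalence on $\uc$ generated by co-landing of rational rays at (pre)periodic points, matching Kiwi's definition, and its closure $\lam^r_P$ matches the real lamination.

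For the first assertion I would then invoke the main theorem of \cite{kiwi97}: for $P$ without neutral cycles there is a monotone surjection $\pi : J_P \to J_{\sim_P}$ semiconjugating $P|_{J_P}$ to the topological polynomial $f_{\sim_P}$, and $\pi^{-1}(x)$ is a single point whenever $x$ is (pre)periodic. Translating this back through the canonical projection $\uc \to J_{\sim_P}$ yields the parenthetical remark about periodic laps: if $G$ is such a lap with vertex arguments $\alpha_1,\dots,\alpha_k$, then the image of these vertices in $J_{\sim_P}$ is a single periodic point whose $\pi$-preimage is one legal periodic point $x \in J_P$, at which every ray $R_P(\alpha_i)$ must land. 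The only bookkeeping is to match our notion of \emph{legal} with Kiwi's notion of (pre)periodic point in $J_{\sim_P}$; this is immediate because, absent neutral cycles, both just mean ``eventually a repelling periodic point''.

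For the realization statement the plan is to apply Kiwi's realizability theorem: any $\si_d$-invariant laminational equivalence on $\uc$ whose infinite periodic gaps all have degree at least $2$ is the real lamination of some polynomial without neutral cycles. Given a clean $\lam$ with no infinite periodic degree $1$ gaps, I would pass to the associated equivalence $\sim_\lam$, apply Kiwi's theorem to produce a $P$ realizing it, and then verify that $\lam^r_P=\lam$ by comparing the two laminations on the dense set of rational leaves and using cleanness to pass to the Hausdorff closure. The main obstacle lies in this last translation: one must confirm that the purely combinatorial hypothesis ``no infinite periodic degree $1$ gap'' coincides with the admissibility condition in \cite{kiw01} that rules out Siegel- or parabolic-type cycles, which is ultimately controlled by Lemma \ref{l:crit-must} together with Kiwi's classification of periodic Fatou gaps. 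Once this identification is made, no further argument is needed.
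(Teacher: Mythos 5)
Your proposal is correct and follows essentially the same route as the paper: the paper offers no proof of Lemma~\ref{l:gland} beyond the remark that it ``summarizes some results of \cite{kiw01, kiwi97}'', having already identified $\Zc^{lam}_P$ and $\lam^r_P$ with Kiwi's rational and real laminations under the no-neutral-cycles hypothesis. Your elaboration of why those identifications hold and how Kiwi's semiconjugacy and realizability theorems yield the two assertions is a faithful unpacking of that citation.
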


If $\lam^r_P$ is regular, then
by Corollary \ref{c:friends-regular} it has a unique regular chief
denoted by $\lam^c_P$; set $\A_P=\Cc(\lam^c_P)$ (observe that then
$\A_P$ is a regular alliance). Equivalently, $\A_P$ can be defined as
the set of all friends of critical portraits from $\Cc_P$. However, if
$\lam^r_P$ is prime (e.g., if $P$ is invisible and, hence, $\lam^r_P$
is empty) set $\A_P=\A_0$ to be the prime alliance. The prime alliance
is special as it serves all invisible polynomials, however diverse they
are. It also serves all polynomials with non-repelling fixed points and
some other polynomials. By Section \ref{s:prime}, the prime alliance is
closed topologically and under friendship. This defines $\A_P$ for any
polynomial $P$ with $[P]\in\M_3$.

\begin{lem}
  \label{l:reg-all}
A regular alliance $\A_P$ has the form $\Cc_{P_0}$ for some visible
polynomial $P_0$, possibly different from $P$.
\end{lem}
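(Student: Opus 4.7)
The plan is to realize the regular chief $\lam^c_P$ as the real lamination $\lam^r_{P_0}$ of some cubic polynomial $P_0$ via Lemma \ref{l:gland}, verify that this $P_0$ is visible, and then observe that the combinatorial counterpart $\Cc_{P_0}$ coincides with $\Cc(\lam^c_P)=\A_P$. First, I need to check that $\lam^c_P$ satisfies the hypotheses of the realization part of Lemma \ref{l:gland}: it is clean and has no infinite periodic degree-$1$ gaps.

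By Lemma \ref{l:oa-lam-1}, the regular chief $\lam^c_P$ is perfect, and perfect laminations are clean (as noted immediately after Definition \ref{perfect}). Suppose for contradiction that $U$ is an infinite periodic gap of $\lam^c_P$ of degree $1$. By Lemma \ref{l:crit-must}, some gap in the forward orbit of $U$ has a critical edge $\ell$. As stated in Section \ref{s:criplam}, a critical edge of any Fatou gap is isolated in the lamination; but perfectness of $\lam^c_P$ forbids isolated leaves (cf.\ the parenthetical remark in Definition \ref{perfect}). This contradiction rules out infinite periodic degree-$1$ gaps in $\lam^c_P$.

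Applying the second part of Lemma \ref{l:gland}, choose $P_0$ with $\lam^r_{P_0}=\lam^c_P$. Since $\lam^c_P$ is a nonempty chief, $\lam^r_{P_0}$ contains a nondegenerate leaf. By the definition of $\lam^r_{P_0}$, every one of its leaves is either an edge of the convex hull of a $\sim_{P_0}$-class of size $\ge 2$, or a limit of such edges; in either case $\sim_{P_0}$ has a non-singleton class, so $\Zc_{P_0}\ne\emptyset$ and $P_0$ is visible.

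It remains to identify $\Cc_{P_0}$ with $\A_P$. Since non-crossing is a closed condition on pairs of chords, a critical portrait fails to be compatible with $\lam^r_{P_0}$ iff one of its critical chords strictly crosses some chord of $\Zc^{lam}_{P_0}$, i.e., iff it is incompatible with $\Zc_{P_0}$. Hence $\Cc_{P_0}=\Cc(\lam^r_{P_0})=\Cc(\lam^c_P)=\A_P$. The main obstacle in this approach is the second step, verifying that the regular chief has no infinite periodic degree-$1$ gaps: this is the point where perfectness of regular chiefs (via Lemma \ref{l:oa-lam-1}) is essential, as it is exactly what prevents the isolated critical Fatou-edge produced by Lemma \ref{l:crit-must} and thus bridges the combinatorial regularity hypothesis with Kiwi's analytic realization theorem.
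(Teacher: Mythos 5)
Your proof is correct and follows essentially the same route as the paper: rule out infinite periodic degree-$1$ gaps of the chief via Lemma \ref{l:crit-must} plus the fact that critical Fatou edges are isolated (contradicting perfectness from Lemma \ref{l:oa-lam-1}), then realize the chief as $\lam^r_{P_0}$ by Lemma \ref{l:gland} and identify $\Cc_{P_0}$ with $\A_P$. The only differences are cosmetic: you add the (easy) visibility check that the paper leaves implicit, and you justify $\Cc_{P_0}=\Cc(\lam^c_P)$ directly rather than by citing Lemma \ref{l:oa-lam-1}.
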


\begin{proof}
Let $\lam$ be a chief of $\lam^r_P$. Then it has no infinite periodic gaps of degree 1.
Indeed, otherwise by Lemma \ref{l:crit-must} it has an
infinite 
gap 
with a critical edge.
By properties of laminations this edge is isolated, a contradiction with $\lam$ being perfect by Lemma \ref{l:oa-lam-1}.
By Lemma \ref{l:gland}, there is a
polynomial $P_0$ without neutral periodic points such that
$\lam^r_{P_0}=\lam$. Then by Lemma \ref{l:oa-lam-1}
$\A_P=\Cc(\lam)=\Cc_{P_0}$ as desired.
\end{proof}

A regular alliance is closed topologically (because $\Cc_{P_0}$ is
closed) and under friendship (by Corollary \ref{c:friends-regular}).

\begin{lem}
\label{l:CP-AP}
For any visible $P$ we have $\Cc_P\subset\A_P$.
\end{lem}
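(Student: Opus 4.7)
The plan is to split along the dichotomy in the definition of $\A_P$: either $\lam^r_P$ is regular, so $\A_P = \Cc(\lam^c_P)$, or $\lam^r_P$ is prime, so $\A_P = \A_0$. The key preliminary observation common to both cases will be the inclusion $\Cc_P \subset \Cc(\lam^r_P)$.

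To establish this preliminary inclusion, I would take any $\K \in \Cc_P$ and any critical chord $c$ of $\K$ and show that $c$ crosses no leaf of $\lam^r_P$. Since leaves of $\lam^r_P$ are either edges of convex hulls of $\sim_P$-classes or Hausdorff limits of such edges, and since non-crossing of the fixed chord $c$ with a varying chord is a closed condition, it suffices to rule out crossings with an edge $\overline{uv}$ of such a convex hull. The endpoints $u,v$ are rational angles lying in a common $\sim_P$-class; using that two $P$-legal landing points are either equal or disjoint, one sees that $\Zc_P$ is already transitive on rationals, so $\{u,v\} \in \Zc_P$. A crossing of $c$ with $\overline{uv}$ would then force $c$ to separate the $P$-legal pair $\{u,v\}$, contradicting $\K \in \Cc_P$.

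In the regular case the chief $\lam^c_P$ is by definition a sublamination of $\lam^r_P$, so any chord unlinked from every leaf of $\lam^r_P$ is unlinked from every leaf of $\lam^c_P$; hence $\Cc_P \subset \Cc(\lam^r_P) \subset \Cc(\lam^c_P) = \A_P$. In the prime case, the primality of $\lam^r_P$ supplies a prime critical portrait $\K^* \in \Cc(\lam^r_P)$. For any $\K \in \Cc_P$, the preliminary step yields $\K \in \Cc(\lam^r_P)$, so $\K$ and $\K^*$ are friends through $\lam^r_P$; since the prime alliance $\A_0$ has been noted to be closed under friendship in the remark preceding Lemma \ref{l:reg-all}, and since $\K^* \in \A_0$, we conclude $\K \in \A_0 = \A_P$.

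The only mild obstacle is justifying the preliminary inclusion carefully; the transitivity of $\Zc_P$ on rational angles and the closed-condition handling of limit leaves make this routine, and the remainder is a direct unpacking of the definitions together with the closure-under-friendship properties of alliances already recorded in the paper.
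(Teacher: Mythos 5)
Your proposal is correct and follows essentially the same route as the paper's own (two-line) proof: split on whether $\lam^r_P$ is regular or prime, use the definition of $\A_P$ in the first case, and closure of $\A_0$ under friendship in the second. The only difference is that you explicitly verify the inclusion $\Cc_P\subset\Cc(\lam^r_P)$ (via transitivity of landing and the closedness of the non-crossing condition on limit leaves), a step the paper treats as immediate ``by definition.''
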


\begin{proof}
If $\lam^r_P$ is regular, $\Cc_P\subset\A_P$ by
definition. If $\lam^r_P$ is prime, then
$\A_P=\A_0$ is prime, and $\Cc_P\subset \A_P=\A_0$ since
$\A_0$ is closed under friendship.
\end{proof}

To prove the Main Theorem we need a couple of new concepts.
Let $P$ be such that $[P]\in \M_3$. A point $x$ is \emph{($P$-)stable}
if its forward orbit is finite and contains no critical or
non-repelling periodic points. The next lemma shows how stable points can be
applied.

\begin{lem}[\cite{hubbdoua85}, cf. Lemma B.1 \cite{gm93}]
 \label{l:rep}
Let $g$ be a polynomial, and $z$ be a stable point of $g$. If an
external ray $R_g(\theta)$ with rational argument $\theta$ lands at
$z$, then, for every polynomial $\tilde g$ sufficiently close to $g$,
the ray $R_{\tilde g}(\theta)$ lands at a stable point $\tilde z$ close
to $z$. Moreover, $\tilde z$ depends holomorphically on $\tilde g$.
\end{lem}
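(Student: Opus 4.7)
The plan is to transfer the landing property from the (repelling periodic) end of the orbit of $z$ back to $z$ itself by applying holomorphic inverse branches of $g$ along the finite forward orbit. First I would note that, since $z$ is stable, its forward orbit is finite and avoids critical and non-repelling periodic points; hence there is a smallest $N\ge 0$ with $p=g^N(z)$ a repelling periodic point of some period $m$. The angle $\eta=3^N\theta$ is rational and $\si_3^{km}$-fixed for some $k\ge 1$, so the ray $R_g(\eta)$ is periodic. By the classical Douady--Hubbard theorem, a periodic rational ray lands at a repelling or parabolic periodic point, and here it must land at $p$.

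Next I would holomorphically continue the endpoint. Since $p$ is repelling, the derivative of $w\mapsto g^m(w)-w$ at $p$ is nonzero, so the implicit function theorem gives a unique holomorphic map $\tilde g\mapsto \tilde p$ defined near $g$ with $\tilde g^m(\tilde p)=\tilde p$ and $\tilde p$ repelling. For the ray, I would then invoke the standard stability result (a version of Lemma B.1 in \cite{gm93}, originally from \cite{hubbdoua85}): landing of a periodic rational ray at a repelling periodic point persists under small holomorphic perturbations of the polynomial. This is proved using a linearizing coordinate for $\tilde g^{km}$ near $\tilde p$, in which the ray tail is asymptotic to a fixed geometric ray determined by the angle; the linearization depends holomorphically on $\tilde g$. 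Thus for $\tilde g$ close to $g$, the ray $R_{\tilde g}(\eta)$ lands at $\tilde p$.

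I would then pull back along the orbit of $z$. Because $z,g(z),\dots,g^{N-1}(z)$ are not critical, each of them has a local holomorphic inverse branch $\phi_k$ of $g$ defined on a neighborhood of $g^{k+1}(z)$ and mapping that neighborhood to one of $g^k(z)$, with $\phi_k(g^{k+1}(z))=g^k(z)$. These inverse branches depend holomorphically on $\tilde g$ (again by the implicit function theorem, since $g'(g^k(z))\ne 0$). Setting
\[
\tilde z \;=\; \phi_0\circ\phi_1\circ\cdots\circ\phi_{N-1}(\tilde p),
\]
we obtain a holomorphic function of $\tilde g$, close to $z$, with $\tilde g^N(\tilde z)=\tilde p$. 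By proximity, the forward orbit of $\tilde z$ remains finite and avoids critical points, and it ends at the repelling cycle of $\tilde p$, so no non-repelling periodic points occur; thus $\tilde z$ is $\tilde g$-stable. Finally, $R_{\tilde g}(\theta)$ maps under $\tilde g^N$ to $R_{\tilde g}(\eta)$, which lands at $\tilde p$; pulling this back with $\phi_0\circ\cdots\circ\phi_{N-1}$ shows that $R_{\tilde g}(\theta)$ lands at $\tilde z$.

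The main obstacle, or rather the one non-elementary input, is the stability of the landing of the periodic rational ray $R_g(\eta)$ at the repelling point $p$ under perturbation of $g$; once that is in hand, the rest is implicit function theorem bookkeeping along the orbit. Since this stability is exactly the content of the cited results of \cite{hubbdoua85} and \cite{gm93}, I would quote it rather than reprove it.
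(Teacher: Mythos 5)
Your argument is correct, and it matches the intended justification: the paper gives no proof of this lemma, citing instead \cite{hubbdoua85} and Lemma B.1 of \cite{gm93}, whose content is precisely the stability of landing of periodic rational rays at repelling periodic points that you isolate as the one non-elementary input. Your reduction of the preperiodic (stable) case to the periodic case by holomorphic pullback along the finite, critical-point-free orbit is exactly the standard derivation, so there is nothing to add.
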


An unordered pair of rational angles
$\{\al,\be\}$ is \emph{($P$-)stable} if the external rays with
arguments $\al$ and $\be$ land at the same stable point of $P$.
Write $\Sc_P$ for the set of all $P$-stable pairs of angles; call $\Sc_P$ the \emph{s-set} of $P$.
Denote by $\Sc^{lam}_P$ the set of chords connecting $e^{2\pi i\al}$ with $e^{2\pi i \be}$,
 where the pair $\{\al,\be\}\in\Sc_P$ is not separated in $\R/\Z$ by any other $P$-stable pair of angles.
Evidently, $\Sc_P\subset \Zc_P$ and $\Sc^{lam}_P\subset \Zc^{lam}_P$; these sets do not
have to coincide as some legal points may be non-stable because their orbits
pass through critical points before they map to repelling periodic points.

If $\Sc_P\ne\0$, let $\Cc^s_P$ be the set of all critical portraits \emph{compatible}
with $\Sc_P$ (i.e., no critical chord from a critical portrait in
$\Cc^s_P$ crosses a leaf from $\Sc^{lam}_P$). It follows that $\Cc_P\subset \Cc^s_P$.

\begin{prop}
  \label{p:C-usc}
The dependence $P\mapsto \Cc^s_P$ is upper semi-continuous.
\end{prop}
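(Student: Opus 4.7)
The plan is to prove sequential upper semi-continuity: if $P_n\to P$ in $\M_3$ and $\K_n\in\Cc^s_{P_n}$ satisfies $\K_n\to\K$ in $\crp$, then $\K\in\Cc^s_P$. Since $\crp$ is compact, this is equivalent to the usual neighborhood formulation of USC. The strategy is to show that each individual $P$-stable pair persists as a $P_n$-stable pair for large $n$, so that the non-crossing relation carries over to the limit.

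First I would observe the following reformulation of compatibility with $\Sc_P$: a critical chord crosses some chord of $\Sc^{lam}_P$ if and only if it separates some $\sim_P$-equivalence class of angles landing at a common stable point, which in turn is equivalent to crossing some (not necessarily outermost) pair in $\Sc_P$ viewed as a chord. Hence $\K\in\Cc^s_P$ is equivalent to saying that no critical chord of $\K$ crosses any chord spanned by a pair $\{\al,\be\}\in\Sc_P$. This reduction lets me argue pair by pair rather than worrying about which pairs happen to lie in $\Sc^{lam}_P$ versus $\Sc^{lam}_{P_n}$.

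Next, fix a pair $\{\al,\be\}\in\Sc_P$ with both $R_P(\al)$, $R_P(\be)$ landing at a stable point $z$. Applying Lemma \ref{l:rep} separately to the rays of arguments $\al$ and $\be$, for $\tilde g$ close to $P$ the rays $R_{\tilde g}(\al)$ and $R_{\tilde g}(\be)$ land at stable points $\tilde z_\al(\tilde g)$ and $\tilde z_\be(\tilde g)$ that are close to $z$ and depend holomorphically on $\tilde g$. The key point (and what I expect to be the main obstacle to write down cleanly) is that these two holomorphic families must coincide: $z$ is an iterated preimage of some repelling periodic point under a prescribed itinerary, so by the implicit function theorem applied to the appropriate iterate of $P$, there is a \emph{unique} holomorphic continuation of $z$ in a neighborhood of $P$, and any stable point that stays close to $z$ along such a neighborhood must be that continuation. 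Thus $\tilde z_\al(\tilde g)=\tilde z_\be(\tilde g)$ near $P$, so for all $n$ large, $\{\al,\be\}\in\Sc_{P_n}$.

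Finally, since $\K_n\in\Cc^s_{P_n}$ and $\{\al,\be\}\in\Sc_{P_n}$, by the reformulation above no critical chord of $\K_n$ crosses the chord $\overline{e^{2\pi i\al}e^{2\pi i\be}}$. Strict crossing of chords in $\disk$ is an open condition in the Hausdorff topology, so the limiting critical portrait $\K$ also does not cross $\{\al,\be\}$. Since $\{\al,\be\}\in\Sc_P$ was arbitrary, $\K$ is compatible with $\Sc_P$, i.e., $\K\in\Cc^s_P$, completing the proof.
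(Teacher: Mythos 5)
Your proof is correct and follows essentially the same route as the paper's: sequential upper semi-continuity, persistence of stable ray pairs under perturbation via Lemma \ref{l:rep}, and the closedness of the non-crossing relation on chords. The only difference is that you spell out two points the paper leaves implicit (that the two holomorphic continuations of the landing point coincide, and that compatibility with $\Sc_P$ can be tested against arbitrary stable pairs rather than only leaves of $\Sc^{lam}_P$), which is a reasonable tightening rather than a new argument.
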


\begin{proof}
We prove that if $P_i\to P$ and we choose $\K_i\in\Cc^s_{P_i}$ with
$\K_i\to\K$, then $\K\in\Cc^s_P$. Assume the contrary: $\K=\{\oc,\oy\}$,
where $\oc$ crosses some $\ell=\ol{ab}\in\Sc^{lam}_P$. By Lemma
\ref{l:rep}, $a\sim_{P_i} b$ for large $i$, and $\K_i$ contains
a critical chord $\oc_i$ close to $\oc$, a contradiction, since then
$\oc_i$ also crosses $\ell$.
\end{proof}

The next lemma relates $\Cc^s_P$ and the alliance $\A_P$.

\begin{lem}\label{l:stablegal}
Take $[P]\in \M_3$. If $\lam^r_P$ is regular, then $\Cc^s_P\subset \A_P$.
\end{lem}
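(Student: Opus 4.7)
The plan is to argue by contradiction: suppose $\K=\{\oc,\oy\}\in\Cc^s_P$ but some critical chord, say $\oc$, strictly crosses a leaf $\ell\in\lam^c_P$, where $\lam^c_P$ is the unique regular chief of $\lam^r_P$ with $\A_P=\Cc(\lam^c_P)$. The target is to produce a chord in $\Sc^{lam}_P$ also crossed by $\oc$, contradicting $\K\in\Cc^s_P$. The chief $\lam^c_P$ is perfect by Lemma \ref{l:oa-lam-1}, and by Lemma \ref{l:periodense} the pullbacks of any fixed periodic leaf are dense in $\lam^c_P$. Hence one can find a preperiodic leaf $\ell'=\ol{ab}\in\lam^c_P$, still strictly crossed by $\oc$, and one may additionally insist that no iterate of $\ell'$ lies on the (finite) list of critical edges and critical leaves of $\lam^c_P$---only countably many pullbacks fail this condition, so the remaining ones are still dense.

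The core step is to show $\ell'\in\Sc^{lam}_P$. The arguments $\alpha=\arg a$, $\beta=\arg b$ are preperiodic rational, so by Douady--Hubbard the rays $R_P(\alpha)$, $R_P(\beta)$ land at preperiodic points $x_\alpha$, $x_\beta$ of $P$. Since $\ell'\in\lam^c_P\subset\lam^r_P=\overline{\Zc^{lam}_P}$, write $\ell'=\lim\ell'_n$ with $\ell'_n\in\Zc^{lam}_P$, where the endpoints of $\ell'_n$ give rays landing at a common legal point $x_n$. The standard upper-semicontinuity of landing at rational arguments (compare Lemma \ref{l:rep} and the proof of Proposition \ref{p:C-usc}) forces $x_\alpha=\lim x_n=x_\beta=:x$. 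Some forward iterate of $x$ lands on a periodic cycle $\mathcal{O}$; being the landing point of a periodic rational ray, the points of $\mathcal{O}$ are repelling or parabolic. Parabolic is excluded: a parabolic cycle of $P$ would induce an invariant degree-one infinite Fatou gap in $\lam^r_P$ and hence an invariant infinite gap in $\lam^c_P$, and by Theorem \ref{t:standard} some $\K'\in\Cc(\lam^c_P)$ would have a weak friend, contradicting regularity of $\lam^c_P$ via Corollary \ref{c:friends-regular}. Thus $\mathcal{O}$ is repelling, and the earlier choice of $\ell'$ avoiding critical orbits ensures the forward orbit of $x$ consists of finitely many non-critical, non-neutral points, so $x$ is stable and $\{\alpha,\beta\}\in\Sc_P$. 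Finally, any other stable pair separating $\{\alpha,\beta\}$ would give a chord of $\Zc^{lam}_P\subset\lam^r_P$ crossing $\ell'$, impossible in a lamination; hence $\ell'\in\Sc^{lam}_P$, and $\oc$ crosses it.

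The principal obstacle is ruling out the parabolic case for the landing cycle $\mathcal{O}$. This is precisely where the regularity of $\lam^r_P$ enters: the analytic presence of a parabolic cycle of $P$ must be translated into a combinatorial invariant degree-one infinite Fatou gap of the chief $\lam^c_P$, which then feeds Theorem \ref{t:standard} to yield a weak friend incompatible with regularity. The remaining technicalities---density of ``good'' preperiodic leaves avoiding the critical orbits, Douady--Hubbard landing, the semicontinuity-of-landing step, and the lamination-edge argument pinning down $\Sc^{lam}_P$-membership---are routine once the parabolic exclusion is in place.
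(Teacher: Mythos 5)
Your overall strategy is the same as the paper's: assume $\oc$ crosses a leaf of the chief, use perfectness and Lemma~\ref{l:periodense} to find, arbitrarily close to that leaf, edges of iterated pullbacks of a periodic lap whose orbit avoids the critical laps, and then show these edges lie in $\Sc^{lam}_P$, contradicting $\K\in\Cc^s_P$. The paper compresses the last step into one sentence; you try to fill it in, and two of your justifications do not quite work as stated.

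First, the ``upper-semicontinuity of landing'' step: Lemma~\ref{l:rep} (and its use in Proposition~\ref{p:C-usc}) concerns perturbing the \emph{polynomial} while keeping the angle fixed; it says nothing about the landing points of $R_P(\theta_n)$ converging to the landing point of $R_P(\theta)$ as $\theta_n\to\theta$ for a \emph{fixed} $P$. Landing is not continuous in the angle, so writing $\ell'=\lim\ell'_n$ with $\ell'_n\in\Zc^{lam}_P$ and concluding $x_\alpha=\lim x_n=x_\beta$ is not justified by the tools you cite. The correct tool is Kiwi's result recorded as Lemma~\ref{l:gland}: the semiconjugacy onto the topological polynomial has trivial fibers at (pre)periodic points, so the rays at the vertices of a (pre)periodic lap of $\lam^r_P$ land at a common point. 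Second, your parabolic exclusion leans on Theorem~\ref{t:standard}, but that theorem applies only to an \emph{invariant} infinite gap or one attached to a finite rotational lap; a parabolic cycle of higher period produces a periodic degree-one Fatou gap to which the theorem need not apply. The argument the paper actually uses for this (in the proof of Lemma~\ref{l:reg-all}) is different: by Lemma~\ref{l:crit-must} a periodic degree-one infinite gap forces a critical edge somewhere in its cycle, such an edge is isolated, and this contradicts perfectness of the regular chief (Lemma~\ref{l:oa-lam-1}). With these two repairs --- neither of which changes your architecture --- your proof coincides with the paper's.
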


\begin{proof}
Let $\lam$ be the chief of $\lam^r_P$. By Lemma \ref{l:oa-lam-1}, it 
is perfect.
Now, let
$\K$ be a critical portrait from $\Cc^s_P$ (i.e., compatible with $\Sc_P$).
We claim that then $\K$ is compatible with
$\lam$. Indeed, otherwise a critical leaf $\oc\in \K$ crosses a leaf $\ell\in \lam$.
By Lemma \ref{l:periodense},
arbitrarily Hausdorff-close to $\ell$,   there are iterated preimages of a periodic lap of $\lam$ that is not an eventual image of
a critical lap of $\lam$; it follows that edges of these preimages are leaves from $\Sc^{lam}_P$,
a contradiction with the assumption that $\K\subset \Cc^s_P$. Thus, $\Cc^s_P\subset \A_P$.
\end{proof}

We are ready to prove Theorem \ref{t:main} which implies the Main Theorem.

\begin{thm}
\label{t:main} The map $P\mapsto\A_P$ from $\M_3$ to the quotient space
of $\crp$ generated by alliances is continuous.
\end{thm}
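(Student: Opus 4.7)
The plan is to verify continuity of $P\mapsto\A_P$ by reducing it to the sequential statement: whenever $P_i\to P$ in $\M_3$ and $\K_i\in\A_{P_i}$ with $\K_i\to\K$ in $\crp$, we have $\K\in\A_P$. Combined with Lemma \ref{l:alli} and compactness of $\crp$ this implies continuity into the quotient. For each $i$, I would pick a witness $\K^*_i\in\A_{P_i}\cap\Cc^s_{P_i}$: if $P_i$ is visible take $\K^*_i\in\Cc_{P_i}\subset\A_{P_i}$ via Lemma \ref{l:CP-AP}; if $P_i$ is invisible take a weak $\K^*_i\in\A_0=\A_{P_i}$, noting that $\Cc^s_{P_i}=\crp$ trivially. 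Passing to a subsequence, $\K^*_i\to\K^*$, and Proposition \ref{p:C-usc} yields $\K^*\in\Cc^s_P$.

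In the regular case ($\A_P$ regular), Lemma \ref{l:stablegal} places $\K^*\in\A_P$. Moreover, $\A_{P_i}$ must be regular for all large $i$: otherwise $\A_{P_i}=\A_0$ for infinitely many $i$, in which case $\K^*_i$ is prime in each such instance (directly for invisible $P_i$, or via $\Cc_{P_i}\subset\A_0$ for visible $P_i$), so $\K^*$ is prime by Lemma \ref{l:prime-cl}, contradicting $\K^*\in\A_P\neq\A_0$. With $\A_{P_i}$ regular, $\K_i$ and $\K^*_i$ are friends via $\lam^c_{P_i}$; Lemma \ref{l:closed} passes this to the limit, and Corollary \ref{c:friends-regular} places $\K$ in the regular alliance $\A_P$.

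In the prime case ($\A_P=\A_0$), I need to show $\K$ is prime. If $\A_{P_i}=\A_0$ infinitely often, Lemma \ref{l:prime-cl} suffices. Otherwise $\A_{P_i}$ is regular for all large $i$; pass to a subsequence with $\lam^c_{P_i}\to\lam^\infty$ in the Hausdorff topology. Compactness of sibling-invariant laminations and Lemma \ref{l:compute} make $\lam^\infty$ nonempty and sibling-invariant, and $\K$ is compatible with every chief $\lam\subset\lam^\infty$ produced by Lemma \ref{l:simple}. If one establishes the upper-semicontinuity $\lam^\infty\subset\lam^r_P$, then $\lam$ is itself a chief of the prime lamination $\lam^r_P$, inherits primality, and $\K$ becomes compatible with a prime chief---hence prime, yielding $\K\in\A_P$.

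The main obstacle is precisely this upper-semicontinuity step $\lam^\infty\subset\lam^r_P$. Stable identifications of $P$ propagate to $P_i$ by Lemma \ref{l:rep}, but the reverse passage---identifications inside $\lam^c_{P_i}$ surviving as identifications of $P$---is delicate at bifurcation parameters where cycles may become neutral. I would exploit the density of periodic leaves in each regular chief $\lam^c_{P_i}$ (Lemma \ref{l:periodense}), the persistence of periodic laps via Lemma \ref{l:limleaf}, and Kiwi's continuity results \cite{kiw01,kiwi97} to argue that every leaf of $\lam^\infty$ is approximated by periodic leaves that persist as identifications of $P$.
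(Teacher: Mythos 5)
Your reduction to the sequential statement and your treatment of the case where $\A_P$ is regular are essentially sound, and your witness trick (passing from $\K_i$ to $\K^*_i\in\Cc_{P_i}\subset\A_{P_i}\cap\Cc^s_{P_i}$, applying Proposition \ref{p:C-usc} and Lemma \ref{l:stablegal} to get $\K^*\in\A_P$, then transporting $\K$ into $\A_P$ by closedness of friendship and Corollary \ref{c:friends-regular}) is a legitimate, even somewhat cleaner, variant of what the paper does with $\Cc^s$ in that case. The problem is the remaining case, $\A_P$ prime with $\A_{P_i}$ regular, which is the actual content of the theorem, and there your argument has a genuine gap: you reduce everything to the inclusion $\lam^\infty=\lim\lam^c_{P_i}\subset\lam^r_P$ and then explicitly defer its proof (``I would exploit \dots to argue \dots''). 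That inclusion is not a technical loose end; it is false in general. The lamination $\lam^r_P$ is built only from \emph{legal} (eventually repelling) identifications, so when a repelling cycle of the $P_i$ degenerates to a parabolic or otherwise neutral cycle of $P$, the corresponding leaves of $\lam^c_{P_i}$ survive in $\lam^\infty$ but have no counterpart in $\lam^r_P$; in the extreme case where $P$ is invisible, $\lam^r_P$ is empty while $\lam^\infty$ is nonempty by Lemma \ref{l:compute}, so the inclusion fails outright. Lemma \ref{l:rep} only propagates stable identifications \emph{from} $P$ \emph{to} nearby $P_i$, never in the direction you need, and neither Lemma \ref{l:limleaf} nor Kiwi's results give upper semicontinuity of $P\mapsto\lam^r_P$ at neutral parameters. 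So the step you flag as ``the main obstacle'' is not an obstacle to be smoothed over: it is the theorem.

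The paper avoids this trap by running the argument in the opposite direction. Assuming for contradiction that $\K$ is regular with regular chief $\lam^\circ$, it takes infinitely many periodic laps $G^j$ of $\lam^\circ$ (Lemma \ref{l:periodense}) disjoint from $\K$, shows via the Thurston pullback laminations $\Tc(\K_i)$ and Lemma \ref{l:thu} that each $G^j$ sits inside a periodic lap $G^j_i$ of $\Tc(\K_i)$ compatible with $\lam^r_{P_i}$, and then splits into two cases: either infinitely many of these laps are realized as finite periodic laps common to all $\lam^r_{P_i}$, in which case Lemma \ref{l:rep} applied at \emph{repelling} periodic points of $P$ forces $\lam^r_P$ to contain a regular sublamination (contradicting primality); or the laps are absorbed into periodic Fatou gaps of $\lam^r_{P_i}$, which is ruled out by a counting/limit argument using Kiwi's orbit-portrait results and the bound on cycles of degree-one infinite gaps. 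None of this machinery, which is what actually circumvents the failure of semicontinuity of $\lam^r$, appears in your sketch. As written, your proposal proves the easy half and restates the hard half as an unproved (and, as stated, incorrect) claim.
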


For a critical portrait $\K$, consider the corresponding Thurston pullback lamination $\Tc(\K)$ (see \cite{thu85}).

\begin{lem}
  \label{l:thu}
  Let $G$ be a periodic lap of some invariant lamination, whose iterated images are disjoint from $\bigcup\K$.
Then leaves of $\Tc(\K)$ cannot cross edges of $G$.
\end{lem}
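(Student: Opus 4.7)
The plan is to prove the lemma by induction on the stage of Thurston's iterated pullback construction of $\Tc(\K)$. Recall that $\Tc(\K)$ is the Hausdorff closure of $\bigcup_k\Lambda_k$, where $\Lambda_0=\K$ and $\Lambda_{k+1}$ is obtained from $\Lambda_k$ by adjoining the $\si$-preimages of its leaves that are compatible with $\K$. Write $G_j=\si^j(G)$ for $j\ge 0$; by hypothesis each $G_j$ is disjoint from $\bigcup\K$. Since $G$ is periodic, $\si$ permutes the finite set $\{G_j\}$ and acts bijectively on each vertex set, so no edge of any $G_j$ is critical and each $\si$ sends edges of $G_j$ to edges of $G_{j+1}$. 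I will show that no leaf of any $\Lambda_k$ crosses any edge of any $G_j$.

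The key geometric input is the following sub-claim: if $\ell_1,\ell_2$ are nondegenerate chords each compatible with $\K$ (neither crosses a chord of $\K$) and $\ell_1$ crosses $\ell_2$, then $\si(\ell_1)$ crosses $\si(\ell_2)$. To see this, note that the crossing point cannot lie on $\bigcup\K$: otherwise one of $\ell_1,\ell_2$ would meet a $\K$-chord transversally at an interior point, violating compatibility. Hence the crossing point lies in some component $R$ of $\disk\setminus\bigcup\K$, both chords are contained in $\ol R$, and all four endpoints lie in $\uc\cap\ol R$. On $\bd R$, the map $\si$ is monotone on each $\uc$-arc and collapses each $\K$-chord to its critical value; for a cubic critical portrait a brief case analysis of the three regions of $\disk\setminus\bigcup\K$ verifies that $\si|_{\bd R}$ is a topologically degree-$1$ quotient map onto $\uc$. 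Consequently $\si$ preserves the cyclic order of $\uc$-points lying in $\ol R$, so an interleaved quadruple of endpoints remains interleaved after applying $\si$, which is exactly the condition for $\si(\ell_1)$ and $\si(\ell_2)$ to cross.

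With the sub-claim in hand the induction is immediate. For $k=0$, a leaf $\ell\in\K$ lies in $\bigcup\K$, which is disjoint from every $G_j$, so $\ell$ cannot share an interior point with any edge of $G_j$. For the inductive step, assume the claim for $\Lambda_k$ and suppose $\ell\in\Lambda_{k+1}$ with $\si(\ell)\in\Lambda_k$ crossed an edge $e$ of some $G_j$. Both $\ell$ and $e$ are compatible with $\K$ (the former by construction, the latter because $G_j\cap\bigcup\K=\0$), so the sub-claim would force $\si(\ell)$ to cross the edge $\si(e)$ of $G_{j+1}$, contradicting the inductive hypothesis. Since crossing is an open condition under Hausdorff convergence of chords, the non-crossing property survives the closure defining $\Tc(\K)$, and specializing to $j=0$ gives the lemma. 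I expect the main obstacle to be the degree-$1$ claim in the sub-claim: it requires a careful inspection of the possible configurations of $\K$ (two disjoint critical chords versus two sharing an endpoint) and of the three resulting regions with their one or two $\uc$-arcs, verifying in each case that the arcs map monotonically under $\si$ and jointly cover $\uc$ exactly once modulo the identifications at the two critical values.
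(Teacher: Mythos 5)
Your argument is essentially correct, and it is worth noting that the paper itself gives no proof here: Lemma \ref{l:thu} is dismissed with the single sentence ``This is a straightforward corollary of \cite{thu85}.'' What you have written is a self-contained version of the standard fact that unlinking with a $\K$-compatible configuration propagates through Thurston's pullback construction, which is exactly what that citation is implicitly invoking. Your preliminary observations are sound: since $\si^n(G)=G$ and $\si^n(G\cap\uc)$ has convex hull $G$, the map $\si$ is injective on the vertices of each $G_j$, so edges go to edges; and a $\K$-compatible chord meeting a component $R$ of $\disk\sm\bigcup\K$ must lie in $\ol R$, where $\si$ preserves the cyclic order of $\uc\cap\ol R$ because each such $R$ carries total arc length $1/3$. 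The one place you overreach is the sub-claim as stated: if the four endpoints of $\ell_1,\ell_2$ include both endpoints of a chord of $\K$, their $\si$-images collide and $\si(\ell_1)$, $\si(\ell_2)$ merely share an endpoint rather than cross, so ``compatible and crossing implies images cross'' is false in general. This does not damage your induction, but you should say why: you apply the sub-claim with $\ell_2=e$ an edge of $G_j$, and since $G_j\cap\bigcup\K=\0$ no vertex of $G_j$ is an endpoint of a $\K$-chord; as $\si$ is injective on $\uc\cap\ol R$ away from those identified endpoints, the four image points remain distinct and strict interleaving is preserved. With that caveat inserted, the induction and the final closure step (crossing of straight chords is an open condition on endpoint interleaving) go through.
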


This is a straightforward corollary of \cite{thu85}.

\begin{proof}[Proof of Theorem \ref{t:main}]
Consider a sequence $P_i\to P$ of polynomials, and set $\A_i=\A_{P_i}$.
We need to show that sets $\A_i$ converge into $\A_P$. To this end it suffices
to consider critical portraits $\K_i\to\K$, where $\K_i\in\A_i$, and show that
$\K\in\A_P$. We may assume that either all $\K_i$
are prime, or all $\K_i$ are regular. If $\K_i$ are prime, then $\K$ is
prime by Lemma \ref{l:prime-cl}. Assume that all $\K_i$ are regular. By
Proposition \ref{p:C-usc}, we have $\K\in\Cc^s_P$. If $\lam^r_P$ is regular, then
by Lemma \ref{l:stablegal} $\K\in \A_P$, and we are done.

Assume that $\lam^r_P$ is prime. We need to show that $\K$ is prime.
Assume that $\K$ is regular. Then $\K$ is compatible with a regular
chief $\lam^\circ$.  By Lemma \ref{l:periodense} there are infinitely
many periodic 
laps of $\lam^\circ$ whose first return is onto.
Let $G$ be one of them, of period $n$, so that $\si^n(G)=G$.
We may assume that the (finite)
orbit of $G$ is disjoint from the chords in $\K$. 
Then the orbit of $G$ is disjoint from the chords in $\K_i$ with large $i$.

Set $\lam_i=\Tc(\K_i)$.
By Lemma \ref{l:thu}, the lap $G$ is contained in a lap $G_i$ of $\lam_i$.
Since $G$ is periodic, so is $G_i$.
The first return map of $\bd(G_i)$ has degree $1$ as any iterated image of $G_i$
 is contained in a complementary component of $\bigcup\K$.
Hence all periodic vertices of $G_i$ are of the same period.
In particular, it implies that if $G'$ and $G''$ are
periodic laps of $\lam^\circ$ such that periods of their vertices are
distinct, then the gap $G'_i$ cannot contain $G''_i$, and vice versa.

It is easy to see that an infinite periodic gap of degree 1 must have
at least one (pre-)critical edge, each critical edge serving at most two such gaps.
Thus, there may exist at most six cycles of infinite periodic degree 1 gaps of a cubic lamination.
Choose a periodic lap $G$ of $\lam^\circ$.
Passing to a subsequence, we may assume that all $G_i$ are finite, or all are infinite.
In the latter case, we \emph{reject} $G$ and replace it with another lap $G'$ whose vertex period is
greater than that of $G$.
By the above, there will be at most six rejected cycles of periodic gaps.
Refine our sequence of polynomials and choose
a sequence of finite periodic laps $G^1$, $\dots$, $G^j$, $\dots$ of $\lam^\circ$ such
that periods of their vertices grow and 
$G^j_i$ are finite for all $i$ and $j$.
If $\lam^\circ$ has Fatou gaps, we may assume that all $G^j_i$ are disjoint from them.


Set $G=G^j$. Since $\K_i$ is compatible with $\lam^r_{P_i}$, the
lamination $\lam^r_{P_i}$ consists of leaves that do not cross leaves
of $\lam_i$. Indeed, if a leaf $\ell'\in \lam^r_{P_i}$ crosses a leaf
$\ell\in \lam_i$, then we may assume that $\ell$ is a pullback of a
leaf of $\K_i$. Then, as above with $G$, the crossing of the two leaves
is kept until $\ell$ maps to a leaf of $\K_i$ which forces the
corresponding image of $\ell'$ to cross the same leaf of $\K_i$, a
contradiction. Hence leaves of $\lam^r_{P_i}$ do not cross leaves of
$\lam_i$.

All $G_i$ have the same period $n$.
By Kiwi \cite{kiw02}, vertices (edges) of gaps from the orbit of any finite
periodic gap of a cubic lamination form one or two orbits. Hence either
$G=G_i$, or $G\subsetneqq G_i$ in which case $G_i$ has two orbits of
edges. Since edges of $G_i$ do not cross leaves of $\lam^r_{P_i}$ then
there are at most finitely many leaves of $\lam^r_{P_i}$ intersecting
the interior of $G$; all these leaves are diagonals of $G_i$. It
follows that here are two cases listed below.

(1) A finite period $n$ lap $H_i$ of $\lam^r_{P_i}$ is non-disjoint from the interior of $G$.
Then $H_i$ is contained in $G_i$
(indeed, by construction the edges of $G_i$ are approximated by distinct pullbacks of leaves of $\K_i$
which are disjoint from leaves of $\lam^r_{P_i}$).

(2) An infinite period $\le n$ gap $U_i$ of $\lam^r_{P_i}$ contains $G_i\supset G$.

Accordingly, consider two cases.

(a) There are infinitely many gaps $G^j$ for which case (1) above
holds. We can find any number of distinct periodic laps which are
shared by all laminations $\lam^r_{P_i}$. Let $H$ be one of them chosen
so that all $P$-external rays whose arguments are vertices of $H$ land
at repelling periodic points. We claim that in fact all $P$-external
rays whose arguments are vertices of $H$ land at the same point.
Indeed, suppose otherwise. Then by Lemma \ref{l:rep} there are
$P_i$-external rays with vertices of $H$ as arguments that land at
distinct points, a contradiction. Thus, $H$ is (non-strictly) contained
in a finite lap $H'$ of $\lam^r_P$. We conclude that $\lam^r_P$ has
infinitely many finite periodic laps.

Thus, $\lam^r_P\ne \0$, and we can always choose a periodic lap $H'$ of
$\lam^r_P$ so that $H'$ corresponds to a point $z'$ which is not an
eventual image of a critical point of $P$. It follows that the entire
$P$-grand orbit of $z'$ consists of stable points. Consider the closure
$\lam'$ of the grand orbit of $H'$.
Evidently, $\lam'$ is a lamination, and $\lam'\subset \lam^r_P$.
If $\lam'$ is incompatible with $\K$, an iterated pullback $\ell$ of an edge of $H'$ crosses a chord in $\K$.
By Lemma \ref{l:rep}, there are leaves of $\lam^r_{P_i}$ that converge to $\ell$ as $i\to \infty$.
Since $\K_i\to \K$, for large $i$ some leaves of $\lam^r_{P_i}$ cross chords in $\K_i$, a contradiction.
Hence $\lam'$ is
compatible with $\K$. However, since $\K$ is regular, it implies that
$\lam'$ and $\lam^r_P\supset \lam'$ are regular, a contradiction.

(b) For all but finitely many gaps $G^j$ and all but finitely many $i$
case (2) above holds and a periodic Fatou gap $U^j_i\supset G^j$ of
period at most $n_j$ exists. Moreover, since there are infinitely many
gaps $G^j$, Fatou gaps in question will contain infinitely many gaps
$G^j$. A priori, as $i\to \infty$, these gaps may change.
We may assume that $\lam^r_{P_i}\to\lam''$.
Since $\K_i\to \K$ and $\K_i$'s are
compatible with $\lam^r_{P_i}$'s, it follows that $\lam''$ is
compatible with $\K$. Hence $\lam^\circ\subset \lam''$. In particular,
the limit $U^j$ of the gaps $U^j_i$ must be contained in a Fatou gap of
$\lam^\circ$ while containing $G^j$. However no gap $G^j$ can be
contained in a Fatou gap of $\lam^\circ$, a contradiction.
\end{proof}

\end{document}